\newcommand{\seqnum}[1]{\href{http://oeis.org/#1}{\underline{#1}}}
\newcommand{\tr}[1]{\textcolor{red}{#1}}
\theoremstyle{plain}
\newtheorem{theorem}{Theorem}[section]
\newtheorem{proposition}[theorem]{Proposition}
\newtheorem{corollary}[theorem]{Corollary}
\newtheorem{lemma}[theorem]{Lemma}
\theoremstyle{definition}
\newtheorem{definition}[theorem]{Definition}
\newtheorem{example}[theorem]{Example}
\newtheorem{remark}[theorem]{Remark}
\newcommand{\PF}{\mathrm{PF}}
\newcommand{\FR}{\mathrm{FR}}
\newcommand{\FRTypeOne}{\mathrm{FR}^{\mathrm{T1}}}
\newcommand{\FRTypeTwo}{\mathrm{FR}^{\mathrm{T2}}}
\newcommand{\FRTypeThree}{\mathrm{FR}^{\mathrm{T3}}}
\newcommand{\UPF}{\mathrm{UPF}}
\newcommand{\UPFTypeOne}{\mathrm{UPF}^{\mathrm{T1}}}
\newcommand{\UPFTypeTwo}{\mathrm{UPF}^{\mathrm{T2}}}
\newcommand{\UPFTypeThree}{\mathrm{UPF}^{\mathrm{T3}}}
\newcommand{\Fub}{\mathrm{Fub}}
\newcommand{\Z}{{\mathbb Z}}
\newcommand{\B}{{\mathcal B}}
\def\modd#1 #2{#1\ \mbox{\rm (mod}\ #2\mbox{\rm )}}
\def\sts#1#2{S_2\left(#1, #2\right)}
\newcommand{\floor}[1]{\lfloor #1 \rfloor}
\title{Restricted Fubini Rankings and Restricted Unit Interval Parking Functions}
\date{July 11, 2025}
\subjclass[2020]{05A15, 05A18}
\keywords{Fubini ranking, unit interval parking function, restricted set partition}
\author[C. Barreto]{Camilo Barreto}
\author[P.~E.~Harris]{Pamela E. Harris}
\address[P.~E.~Harris]{Department of Mathematical Sciences, University of Wisconsin-Milwaukee, Milwaukee, WI 53211 United States}
\email{\textcolor{blue}{\href{mailto:peharris@uwm.edu}{peharris@uwm.edu}}}
\author[J. L. Ram\'{\i}rez]{Jos\'e L. Ram\'{\i}rez}
\author[S. Ram\'{\i}rez]{Samuel Ram\'{\i}rez}
\author[J. C. V\'{a}squez]{Julio C.  Vasquez}
\address[C. Barreto, J. L. Ram\'{\i}rez, S. Ram\'{\i}rez, J. C. V\'{a}squez]{Departamento de Matem\'aticas,  Universidad Nacional de Colombia,  Bogot\'a, Colombia}
\email{\textcolor{blue}{\href{mailto:cbarreto@unal.edu.co}{cbarreto@unal.edu.co}}}
\email{\textcolor{blue}{\href{mailto:jlramirezr@unal.edu.co}{jlramirezr@unal.edu.co}}}
\email{\textcolor{blue}{\href{mailto:samramirezra@unal.edu.co}{samramirezra@unal.edu.co}}}
\email{\textcolor{blue}{\href{mailto:julcvasquezare@unal.edu.co}{julcvasquezare@unal.edu.co}}}
\begin{document}
\begin{abstract}
We study three natural types of restrictions on Fubini rankings and unit interval parking functions, which are motivated by 
their correspondence with ordered set partitions. For each restriction type, we define the corresponding subset of Fubini rankings and unit interval parking functions, establish enumerative results, and provide bijections between the restricted families. 
We also obtain exponential generating functions and combinatorial interpretations, including connections with exceedances in permutations and with the absence of cyclical adjacencies in set partitions.
\end{abstract}

\maketitle

\section{Introduction}
\label{intro}
Throughout, we let $n\in\mathbb{Z}^+=\{1,2,3,\ldots\}$ and $[n]=\{1,2,\ldots,n\}$. Parking functions of length $n$ are a set of tuples encoding the parking preference of cars parking on a one-way street that allow all cars to park on the $n$ spots on the street. 
More precisely, a tuple $(a_1,a_2,\ldots,a_n)\in[n]^n$ has $a_i$ as the parking preference of car $i$, for all $1\leq i\leq n$. Cars enter the street in order and parks in the first spot it encounters at or past its preference. If all cars are able to park under this parking scheme, then the tuple is called a parking function. 
We let $\PF_n$ denote the set of parking functions and recall that these were introduced by Konheim and Weiss~\cite{konheim1966occupancy} who established that $|\PF_n|=(n+1)^{n-1}$.

Since their introduction, parking functions have received a lot of attention, given their numerous connections to a variety of areas. 
For example, parking functions have connections to hyperplane arrangements~\cite{partial,stanleyhyperplane}, computing volumes of flow polytopes~\cite{Benedetti2018ACM}, diagonal harmonins~\cite{diagonal}, counting Boolean intervals in the Weak order of the symmetric group~\cite{CountingBoolean}, the Tower of Hanoi~\cite{displacement_hanoi}, and the \texttt{Quicksort} algorithm~\cite{quicksort}, among many such connections.
Generalizations of parking functions abound and include cases where the cars are able to back up~\cite{knaple,knaples2}, cars are able to bump cars out of their parking spots~\cite{MVP}, where the cars have a variety of lengths~\cite{inv_assort,assortandseq}, where the cars have a fixed interval in which they tolerate parking~\cite{ellinterval,fang2024vacillatingparkingfunctions,Hadaway_unit_interval,subset}, and some in which certain spots are unavailable at the beginning of parking~\cite{completions}. 
There is also work on studying discrete statistics of parking functions to include counting the number of lucky cars, those cars that park in their preference~\cite{colmenarejo2024luckydisplacementstatisticsstirling,FixedLuckyCars}, and counting ties, descents, ascents, and fixed points~\cite{statsofPFs,Schumacher_Descents_PF,fixedpointsofPFs}.
For more on parking functions, we recommend~\cite{yan2015parking} and for an article with open problems see~\cite{choose}.

Among subsets of parking functions, unit interval parking functions have been shown to have a rich combinatorial structure. 
We recall that unit interval parking, introduced by Hadaway~\cite{Hadaway_unit_interval}, are the subset of parking functions in which a car parks either in its preference of in the spot after its preference. We let $\UPF_n$ denote the set of unit interval parking functions of length $n$. 
Hadaway established a bijection between unit interval parking functions and Fubini rankings, which are tuples encoding the results of a competition of $n$ players in which ties are allowed. 
We let $\FR_n$ denote the set of Fubini rankings with $n$ competitors. 
Fubini rankings are in bijection with ordered set partitions, where 
a set partition of $[n]$ is a collection of pairwise disjoint subsets, which we call
blocks, whose union is precisely $[n]$, while an ordered set partition of $[n]$ is simply a set partition in which the order of the blocks matter. 
The number of partitions of $[n]$ having exactly $k$ blocks is the Stirling numbers of the second kind  \seqnum{A008277}, and the number of ordered set partitions of $[n]$ having $k$ blocks is the Fubini numbers (also known as the ordered Bell numbers) \seqnum{A000670}.
Given the aforementioned bijections, the number of unit interval parking functions and the number of Fubini rankings is a Fubini number. 

In this article, we are interested in the study of $S$-restricted Fubini rankings and $S$-restricted unit interval parking functions. 
This is motivated by the connection of Fubini rankings and unit interval parking functions to ordered set partitions, and  by the study of $S$-restricted set partitions. 
B\'enyi and Ram\'irez~\cite{Benyi} studied set partitions with the additional restriction
that the size of each block is contained in a given subset $S \subseteq \mathbb{Z}^+$ and they call these $S$-restricted set partitions. For more on restricted set partitions see \cite{Mendez, Benyi, Caicedo, EGS, Miksa, Moll}.

We introduce three types of restricted Fubini rankings and three types of restricted unit interval parking functions. We define those next and follow that with a list of our main results.
\begin{definition}\label{def:all defs together}
Let $S\subseteq\mathbb{Z}^+$ be a subset of positive integers. 
\begin{center}
\begin{longtable}{lp{.82\textwidth}}
$\UPF$ Type 1:&
We define $\UPFTypeOne_{n,S}$  as the set of unit interval parking functions of length $n$ such that the number of lucky cars belongs to $S$. The set $\UPFTypeOne_{n,S}$, is called the set of \emph{$S$-restricted unit interval parking functions of type 1}.\\
$\FR$ Type 1:&
We define $\FRTypeOne_{n,S}$ as the set of Fubini rankings with $n$ competitors such that the number of distinct ranks belongs to $S$.
The set $\FRTypeOne_{n,S}$, is called the set of \emph{$S$-restricted Fubini rankings of type~1}.\\
$\UPF$ Type 2:&
Let $\beta \in \UPF_n$  with block structure $\beta' = \pi_1 | \pi_2 | \cdots | \pi_k$ (see \Cref{def:UPF block structure}).
We define $\UPFTypeTwo_{n,S}$ as the set of unit interval parking functions of length $n$ such that the size of each block of $\beta$ belongs to $S$. 
The set $\UPFTypeTwo_{n,S}$, is called the set of \emph{$S$-restricted unit interval parking functions of type~2}.\\

$\FR$ Type 2:&
We define $\FRTypeTwo_{n,S}$ as the set of Fubini rankings with $n$ competitors if, for each entry $x$ in $\alpha\in \FRTypeTwo_{n,S}$, the number of occurrences of $x$ in $\alpha$ belongs to $S$. 
The set $\FRTypeTwo_{n,S}$ is called the set of \emph{$S$-restricted Fubini rankings of type~2}.\\

$\UPF$ Type 3:&
Let $\beta \in \UPF_n$  with block structure $\beta' = \pi_1 | \pi_2 | \cdots | \pi_k$  (see \Cref{def:UPF block structure}).
A sequence $S=(s_1,s_2,\dots)$ of positive integers is called a \textit{block sequence} for $\beta$ if $|\pi_i| \leq s_i$ for each $i=1,\ldots,k$. 
We define $\UPFTypeThree_{n,S}$ as the set of unit interval parking functions of length $n$ with block sequence $S$. 
The set $\UPFTypeThree_{n,S}$ is called the set of \emph{$S$-restricted unit interval parking functions of type~3}.\\
$\FR$ Type 3:&
Let $ \alpha \in \FR_n $  have $k$ distinct ranks. The \emph{position vector} of $\alpha$ is the vector $\mathbf{c}=(c_1,\ldots, c_k)$, where $c_i$ is the number of entries of $\alpha$ that take the value $c_0+c_1+\cdots+c_{i-1}$  for each $i=1,\dots,k$, with $c_0=1$. 

Let $ \alpha \in \FR_n $  have position vector $\bm{c}=(c_1,\dots,c_k)$. A sequence of positive integers  $S=(s_1,s_2,\dots)$ is called a \textit{block sequence} for $\alpha$ if $c_i \leq s_i$ for each $i=1,2, \ldots,k$. We denote by $\FRTypeThree_{n,S}$ the set of Fubini rankings of length $n$ with block sequence $S$. If $\alpha \in \FRTypeThree_{n,S}$, we say that $\alpha$ is \emph{$S$-restricted Fubini rankings of type 3}.
\end{longtable}
\end{center}
By definition, we consider the empty set as a Fubini ranking of size zero, and similarly for unit parking functions of any type. 
\end{definition} 
Our main results include the following.
\begin{itemize}
\item \Cref{biyection}:
For all $n \in \Z^+$ and any set $S$ of positive integers, the set $\FRTypeOne_{n,S}$ is in bijection with $ \UPFTypeOne_{n,S}$.
\item \Cref{thm:egf UPF type 1}: If $S\subseteq\mathbb{Z}^+$, then
$\sum_{n\geq 0} |\UPFTypeOne_{n,S}| \frac{x^n}{n!}=\sum_{k \in S} (e^x -1)^k.$
\item \Cref{thm:UPF type 1 set of evens}:
If $S\subseteq 2\mathbb{Z}^+$, then
$
|\UPFTypeOne_{n,S}| = \sum_{m\in S} \sum_{j=1}^m (-1)^{m-j} \binom{m}{j} j^n$.

\item \Cref{thm:cic1}:  Let $\mathscr{L}_n$ be the set of  ordered set partitions of  $[n]$ with an even number of blocks and let  $\mathscr{O}_n$ be the set of all ordered set partitions of the set $[n]$ with no cyclical adjacencies (\Cref{def:no cyclical adjacencies}).
 Then, for all $n\geq 1$, the
set $\mathscr{L}_n$ is in bijection with $\mathscr{O}_n$.
\item  \Cref{thm:equinumerous}:
For all $n\geq 1$, we have $|\UPFTypeOne_{n, 2 \Z^+}|=|\mathscr{O}^{\UPF}_n|$, 
where $\mathscr{O}^{\UPF}_n$ denotes 
the set of all unit interval parking functions of length $n$ with no cyclical adjacencies (\Cref{def:UPF no cyclical adj}).
\item \Cref{thm:exceedances}:
For $n\geq 2$, 
the elements of $\UPFTypeOne_{n,\{n-1\}}$ are in bijection with the exceedances of the permutations of $[n]$.
In particular, $|\UPFTypeOne_{n,\{n-1\}}|=(n-1)\frac{n!}{2}$, the number of exceedances of the permutations of $[n]$.
\item \Cref{thm:FR type 2 k divides n}:
If $S=\{k\}$ for some $k$ that divides $n$, then $|\FRTypeTwo_{n,S}|=\frac{n!}{(k!)^{n/k}}$.
\item \Cref{thm:FR type 2}:
Let  $S=\{s_1,\dots,s_j\}$ be a finite subset of positive integers, and let $C=\{\bm{c}=(c_1, \ldots, c_j) : \sum_{i=1}^j s_i c_i = n\}$. Then
$$\FRTypeTwo_{n,S}=n! \sum_{\bm{c} \in C} \binom{\sum_{k=1}^j c_{k}}{c_1,\dots,c_j} \prod_{i=1}^j (s_i)!^{-c_i}.$$
\item \Cref{thm:FR type 2 bijection UPF type 2}:
For all $n \in \Z^+$ and any set $S$ of positive integers, 
the set $\FRTypeTwo_{n,S}$ is in bijection with $ \UPFTypeTwo_{n,S}$.
\item \Cref{counting:type3}:
For all $n \in \Z^+$ and every sequence of positive integers $S=(s_1,s_2,\ldots)$, we have 
\begin{align*}
|\FRTypeThree_{n,S} |= \sum_{k=1}^{n} \sum_{(c_1,c_2,\ldots,c_k) \in M_{S,k,n}} \binom{n}{c_1,c_2,\ldots ,c_k}, 
\end{align*}
where 
$M_{S,k,n} = \left\{(c_1, c_2, \dots, c_k)  \mid  c_1+c_2+\cdots + c_k= n  \text { and } c_i\in[s_i] \text{ for all } 1\leq i \leq k\right\}$.
\item \Cref{thm:UPF type 3 bijection FR type 3}
For all $n \in \Z^+$ and any sequence $S$ of positive integers, the set $\FRTypeThree_{n,S}$ is in bijection with $\UPFTypeThree_{n,S}$.
\end{itemize}

This article is organized as follows. 
In \Cref{sec:Preliminar}, we give some preliminary results and recall some main facts about unit interval parking functions, unit interval parking functions, and their connections to set partitions. 
In \Cref{sec:restricted UPF Type 1}, \ref{sec:restricted type 2}, and \ref{sec:restricted type 3} we prove results related to restricted Fubini rankings and restricted unit interval parking functions of type 1, 2, and 3, respectively.
We conclude with \Cref{sec:future} some directions for future study.

\section{Preliminaries}\label{sec:Preliminar}

In this section, we provide definitions, set notation, and establish some preliminary results which help in subsequent sections. 
We begin with the definition of a unit interval parking function.
\begin{definition}\label{def:UPF}
Let $\alpha = (a_1,\dots,a_n)\in[n]^n$ be a parking preference of $n$ cars on a one-way street with $n$ parking spots. We say that $\alpha$ is a \textit{unit interval parking function} of length $n$ if each car $i$ can park either in its preferred spot $a_i$ or in the next one, $a_i+1$. The set of all unit interval parking functions with length $n$ is denoted by $\UPF_n$.
\end{definition}

We illustrate the definition of unit interval parking functions next.

\begin{example}\label{ex1}
Consider $\alpha=(6,1,5,1,2,3)$. This is a unit interval parking function of length $6$, and the final configuration of the cars parking on the street, which is called the outcome of $\alpha$,  is illustrated in \Cref{fig:example of pf}. In contrast,  $\alpha=(1,5,1,3,6,2)$ is not a unit interval parking function, since car $6$ cannot park in either spot $2$  or in spot $3$, see illustration in \Cref{fig:ex2}. This example also illustrates that not every rearrangement of the entries of a unit interval parking function remains a unit interval parking function. 

\begin{figure}[ht!]
    \centering
    \resizebox{\textwidth}{!}{
    \begin{tikzpicture}
        \node at(0,0){\includegraphics[width=1in]{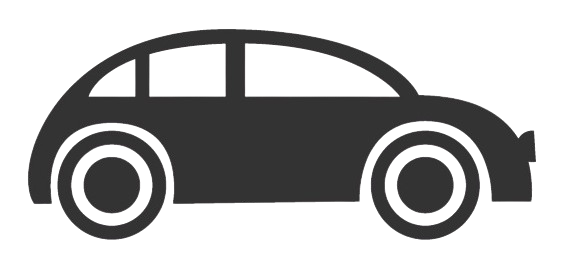}};
        \node at(3,0){\includegraphics[width=1in]{car.png}};
        \node at(6,0){\includegraphics[width=1in]{car.png}};
        \node at(9,0){\includegraphics[width=1in]{car.png}};
        \node at(12,0){\includegraphics[width=1in]{car.png}};
        \node at(15,0){\includegraphics[width=1in]{car.png}};
        \node at(0,-.05){\textcolor{white}{\textbf{2}}};
        \node at(3,-.05){\textcolor{white}{\textbf{4}}};
        \node at(6,-.05){\textcolor{white}{\textbf{5}}};
        \node at(9,-.05){\textcolor{white}{\textbf{6}}};
        \node at(12,-.05){\textcolor{white}{\textbf{3}}};
        \node at(15,-.05){\textcolor{white}{\textbf{1}}};
        \node at(.9,1.){\includegraphics[width=.75in]{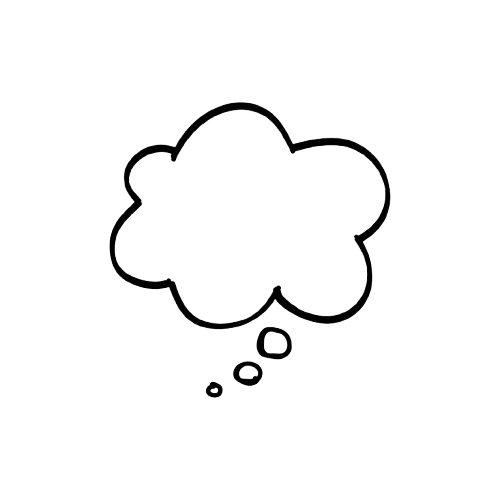}};
        \node at(3.9,1.){\includegraphics[width=.75in]{callout.png}};
        \node at(6.9,1.){\includegraphics[width=.75in]{callout.png}};
        \node at(9.9,1.){\includegraphics[width=.75in]{callout.png}};
        \node at(12.9,1.){\includegraphics[width=.75in]{callout.png}};
        \node at(15.9,1.){\includegraphics[width=.75in]{callout.png}};
        \node at(.9,1.1){\textbf{1}};
        \node at(3.9,1.1){\textbf{1}};
        \node at(6.9,1.1){\textbf{2}};
        \node at(9.9,1.1){\textbf{3}};
        \node at(12.9,1.1){\textbf{5}};
        \node at(15.9,1.1){\textbf{6}};
        \draw[ultra thick](-1.25,-.6)--(1.25,-.6);
        \draw[ultra thick](1.75,-.6)--(4.25,-.6);
        \draw[ultra thick](4.75,-.6)--(7.25,-.6);
        \draw[ultra thick](7.75,-.6)--(10.25,-.6);
        \draw[ultra thick](10.75,-.6)--(13.25,-.6);
        \draw[ultra thick](13.75,-.6)--(16.25,-.6);
        \node at(0,-1){\textbf{1}};
        \node at(3,-1){\textbf{2}};
        \node at(6,-1){\textbf{3}};
        \node at(9,-1){\textbf{4}};
        \node at(12,-1){\textbf{5}};
        \node at(15,-1){\textbf{6}};
        
    \end{tikzpicture}
    }
    \vspace{-.3in}
    \caption{The parking outcome for the unit interval parking function $(6,1,5,1,2,3)$.}
    \label{fig:example of pf}
\end{figure}

\begin{figure}[ht!]
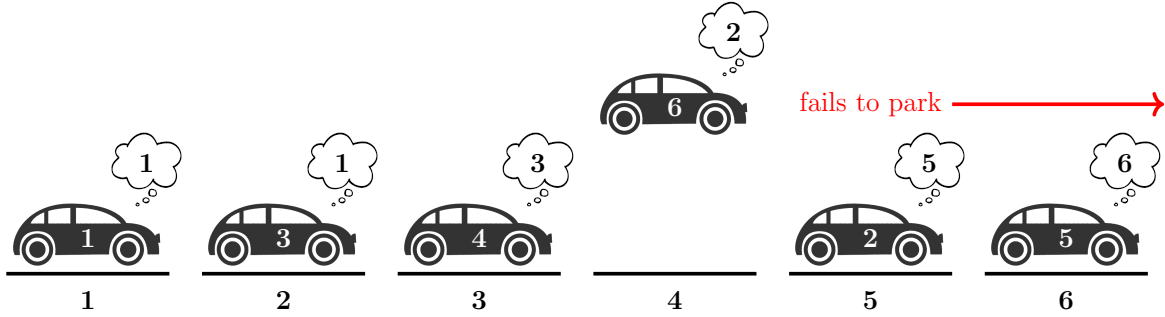

    \centering
    \resizebox{\textwidth}{!}{
    \begin{tikzpicture}
        \node at(0,0){\includegraphics[width=1in]{car.png}};
        \node at(3,0){\includegraphics[width=1in]{car.png}};
        \node at(6,0){\includegraphics[width=1in]{car.png}};
        \node at(9,2){\includegraphics[width=1in]{car.png}};
        \node at(12,0){\includegraphics[width=1in]{car.png}};
        \node at(15,0){\includegraphics[width=1in]{car.png}};
        \node at(0,-.05){\textcolor{white}{\textbf{1}}};
        \node at(3,-.05){\textcolor{white}{\textbf{3}}};
        \node at(6,-.05){\textcolor{white}{\textbf{4}}};
        \node at(9,1.95){\textcolor{white}{\textbf{6}}};
        \node at(12,-.05){\textcolor{white}{\textbf{2}}};
        \node at(15,-.05){\textcolor{white}{\textbf{5}}};
        \node at(.9,1.){\includegraphics[width=.75in]{callout.png}};
        \node at(3.9,1.){\includegraphics[width=.75in]{callout.png}};
        \node at(6.9,1.){\includegraphics[width=.75in]{callout.png}};
        \node at(9.9,3.){\includegraphics[width=.75in]{callout.png}};
        \node at (12,2){\tr{fails to park}};
         \draw[->,ultra thick, red](13.25,2)--(16.5,2);
        \node at(12.9,1.){\includegraphics[width=.75in]{callout.png}};
        \node at(15.9,1.){\includegraphics[width=.75in]{callout.png}};
        \node at(.9,1.1){\textbf{1}};
        \node at(3.9,1.1){\textbf{1}};
        \node at(6.9,1.1){\textbf{3}};
        \node at(9.9,3.1){\textbf{2}};
        \node at(12.9,1.1){\textbf{5}};
        \node at(15.9,1.1){\textbf{6}};
        \draw[ultra thick](-1.25,-.6)--(1.25,-.6);
        \draw[ultra thick](1.75,-.6)--(4.25,-.6);
        \draw[ultra thick](4.75,-.6)--(7.25,-.6);
        \draw[ultra thick](7.75,-.6)--(10.25,-.6);
        \draw[ultra thick](10.75,-.6)--(13.25,-.6);
        \draw[ultra thick](13.75,-.6)--(16.25,-.6);
        \node at(0,-1){\textbf{1}};
        \node at(3,-1){\textbf{2}};
        \node at(6,-1){\textbf{3}};
        \node at(9,-1){\textbf{4}};
        \node at(12,-1){\textbf{5}};
        \node at(15,-1){\textbf{6}};
    \end{tikzpicture}
    }
    \vspace{-.3in}
    \caption{The preference list $(1,5,1,3,6,2)$ is not a unit interval parking function as car $6$ is unable to park at most one spot away from its preference.}
    \label{fig:ex2}
\end{figure}
\end{example}

Bradt et al.\ \cite{UPF} established a connection between unit interval parking functions and Fubini rankings. In this paper, we explore this relationship in more detail. 
Let us begin by giving the technical definition of a Fubini ranking.

\begin{definition}\label{FubRan}
Let $\beta = (b_1,\dots,b_n)\in [n]^n$. We say that $\beta$ is a \textit{Fubini ranking} if it satisfies the following conditions:
\begin{enumerate}
    \item There exists at least one index $i\in [n]$ such that $b_i=1$.
    \item For every $x\in [n]$, if there are $k>0$ entries of $\beta$ equal to $x$, then the next possible value on $\{b_1,\dots,b_n\}$ that is greater than $x$ must be $x+k$.
\end{enumerate}
The set of all Fubini rankings of length $n$ is denoted by $\FR_n$.
The distinct values appearing in a Fubini ranking are called \textit{ranks}.
\end{definition}

We illustrate the definition of Fubini rankings next.
\begin{example}Consider $\beta=(1,4,2,2,6,4)$. It satisfies condition (1) of \Cref{FubRan} since $1$ appears in $\beta$. For condition (2), we verify that each value follows the required pattern: there is one occurrence of $1$, so the next value must be $2 = 1+1$; two occurrences of $2$, so the next must be $4 = 2+2$ (as $3$ is absent); and two occurrences of $4$, so the next must be $6 = 4+2$ (as $5$ is absent). If $6$ were replaced by $5$, condition (2) would fail, and $\beta$ would not be a Fubini ranking.
\end{example}

Note that every Fubini ranking is a parking function, for a proof see~\cite{Hadaway_unit_interval}. Next we define ordered set partitions. 

\begin{definition}\label{def:set partition}
An \textit{ordered  partition} of $[n]$ into $k$ parts (also called blocks) is a sequence $(B_1,B_2,\dots,B_k)$ of disjoint nonempty subsets of $[n]$ whose union is $[n]$. 
We let $\B_n$ denote the set of all ordered  partitions of $n$.
\end{definition}

There is a natural bijection between ordered set partitions of $[n]$ and Fubini rankings with $n$ competitors. 

To construct a Fubini ranking from an ordered set partition $B=(B_1,B_2,\dots,B_k)\in \B_n$, we associate to it a word $(b_1, b_2,\dots, b_n)$ defined by setting   $b_i=1+\sum_{\ell=1}^{j-1}|B_\ell|$ if and only if $i\in B_j$. 
For example, the ordered set partition $(\{2, 5\}, \{1,3,4\}, \{7, 8\}, \{6\})$ corresponds to the ranking $(3,1,3,3,1,8,6,6)$. This construction defines a bijection between ordered set partitions and Fubini rankings.

The \emph{Fubini numbers} (or \emph{ordered Bell numbers}) count the number of \emph{ordered set partitions} of a set with $n$ elements, that is $\Fub_n=|\B_n|$ for all $n\geq 0$. The \emph{Stirling numbers of the second kind}, denoted $\sts{n}{k}$, count the number of ways to partition the set $[n]$ into $k$ nonempty subsets, for more details about set partitions see~\cite{Mezo2}. 
The Fubini numbers are given by (cf.\ \cite{Mezo, Komatsu})
$$
\Fub_n=\sum_{k=0}^n k!\sts{n}{k}.
$$
They can also be expressed using binomial coefficients or as an infinite series:
$$
\Fub_n=\sum_{k=0}^n\sum_{j=0}^k(-1)^{k-j}\binom{k}{j}j^n=\frac{1}{2}\sum_{m=0}^\infty\frac{m^n}{2^m}.
$$
The first few Fubini numbers (sequence \seqnum{A000670} on the OEIS~\cite{OEIS}) are 
$$
1, \quad  1, \quad  3, \quad  13, \quad  75, \quad  541, \quad  4683, \quad  47293, \quad  545835,  \quad  7087261,\dots.$$

We recall that the exponential generating function for the  Fubini numbers is given by
\begin{equation*}
\frac{1}{2-e^x}=\sum_{n=0}^\infty \Fub_n\frac{x^n}{n!}.
\end{equation*}
Moreover, Fubini numbers satisfy the recurrence relation:
\begin{equation*}
\Fub_n=\sum_{j=1}^n\binom{n}{j}\Fub_{n-j}.
\end{equation*}

We now provide a definition first given by Bradt et al.\ \cite{UPF}, which helps with the characterization of unit interval parking functions and gives a connection to set partitions. 

\begin{definition}[Definition 2.7 in~\cite{UPF}]\label{def:UPF block structure}
Let $\alpha=(a_1,\dots,a_n)\in \PF_n$, and let $\alpha^{\uparrow}=(a_1',\dots,a_n')$ be the weakly increasing rearrangement of $\alpha$. The \textit{block structure} of $\alpha$, denoted by $\alpha'=\pi_1|\pi_2|\cdots|\pi_k$, is the partition of $\alpha^{\uparrow}$ into consecutive block $\pi_j$, where each block $\pi_j$ starts at the $j$-th entry $a_i'$ of $\alpha^{\uparrow}$  satisfying $a_i'=i$. The elements  $\pi_1,\dots,\pi_k$ are called \textit{blocks}. 
\end{definition}

We illustrate the block structure of a unit interval parking function next.
\begin{example}
Consider $\alpha=(7,5,1,1,5,7,2,8,3)\in \UPF_9$. Its weakly increasing rearrangement is  $\alpha^{\uparrow}=(1,1,2,3,5,5,7,7,8)$. Identifying the positions where $a_i' = i$, we obtain the block structure  $\alpha'=1123|55|778$, with blocks $\pi_1=1123$, $\pi_2=55$, and $\pi_3=778$.
\end{example}

As we showed in \Cref{ex1}, a rearrangement of a unit interval parking functions is not necessarily a unit interval parking function. However, certain rearrangements do preserve this property.  
The following result, established by Bradt et al.\ \cite{UPF}, characterizes these cases.

\begin{theorem}[Theorem 2.9 in~\cite{UPF}]\label{T1}
Let $\alpha=(a_1,\dots,a_n)\in\UPF_n$ and $\alpha'=\pi_1|\cdots|\pi_k$ be the block structure of $\alpha$. Then
\begin{enumerate}
    \item A rearrangement $\sigma$ of $\alpha$ belongs to $\UPF_n$ if and only if it preserves the relative  order of the entries within each block of $\alpha$.

    \item The number of such  rearrangements in $\UPF_n$ is given by the multinomial coefficient
    \[
    \binom{n}{|\pi_1|,\dots,|\pi_k|}.
    \]
\end{enumerate}
\end{theorem}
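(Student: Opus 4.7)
My plan is to exploit the block structure to reduce the problem to independent parking subprocesses on disjoint spot ranges. The key structural ingredient, which I would establish first, is the following: if $\alpha \in \UPF_n$ has block structure $\alpha' = \pi_1 | \cdots | \pi_k$, writing $s_j = 1 + \sum_{i<j} |\pi_i|$ for the starting spot of block $j$, then (i) the sorted preferences inside block $j$ are $(s_j, s_j, s_j+1, \ldots, s_j + |\pi_j| - 2)$ when $|\pi_j| \geq 2$ and $(s_j)$ when $|\pi_j| = 1$, and (ii) the cars whose preferences fall in $\pi_j$ park in exactly the spots $[s_j, s_j + |\pi_j| - 1]$. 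For (i), the parking function condition gives $a_i' \leq i$, and a short argument (the $i$ cars of smallest preference occupy $i$ distinct spots in $[1, a_i'+1]$ by the unit displacement property) gives $a_i' \geq i - 1$; since positions inside a block are precisely those with $a_i' \neq i$, we must have $a_i' = i - 1$ there. For (ii), the $s_{j+1} - 1$ cars with preference at most $s_{j+1} - 1$ park within $[1, s_{j+1}]$ by unit displacement, while the remaining $n - s_{j+1} + 1$ cars can only park in $[s_{j+1}, n]$; counting spots forces the first group to fill $[1, s_{j+1}-1]$ exactly. Iterating on $j$ isolates block $j$ in the range $[s_j, s_j + |\pi_j| - 1]$.

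With this lemma the forward direction of (1) is short: a rearrangement $\sigma$ preserving intra-block relative order produces, in each block-restricted subprocess, the same arrival order of cars as $\alpha$; since the blocks occupy disjoint spot ranges no cross-block interaction can occur, and the parking of $\sigma$ succeeds in exactly the same way as that of $\alpha$. For the reverse direction I would prove a uniqueness lemma: within a single block, the sorted preferences $(s_j, s_j, s_j + 1, \ldots, s_j + |\pi_j| - 2)$ admit, as a tuple of values, a unique ordering that is a UPF on $[s_j, s_j + |\pi_j| - 1]$. Shifting to spots $[1, m]$ with $m = |\pi_j|$ and preferences $(1, 1, 2, \ldots, m - 1)$, the unique car with preference $m - 1$ must park at spot $m$ (no other preference can reach spot $m$ under unit displacement), which forces the unique car with preference $m - 2$ to occupy spot $m - 1$ and hence to arrive after the car occupying spot $m - 2$; iterating this chain down shows the tuple of preferences in arrival order is forced to be $(1, 1, 2, 3, \ldots, m - 1)$, with the two indistinguishable preference-$1$ cars yielding the same tuple regardless of their internal order. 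Applying this to $\sigma \in \UPF_n$, which shares the block structure of $\alpha$ and whose block-$j$ restriction is a UPF on $[s_j, s_j + |\pi_j| - 1]$, the ordering of entries inside each block must coincide with that of $\alpha$, establishing the relative-order condition.

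Part (2) then follows immediately from (1): a UPF rearrangement of $\alpha$ is uniquely determined by choosing, for each $j$, which $|\pi_j|$ positions among the $n$ positions of $\sigma$ hold the entries of block $j$, since the values placed in those positions are then forced by the intra-block uniqueness. The number of such interleavings is $\binom{n}{|\pi_1|, \ldots, |\pi_k|}$. The main technical obstacle is the uniqueness lemma used in the reverse direction of (1); the block-independence argument for the forward direction and the multinomial count in (2) are clean consequences of that lemma together with the structural facts (i)--(ii).
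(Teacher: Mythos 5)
This theorem is imported from Bradt et al.\ (Theorem 2.9 in \cite{UPF}); the paper states it with a citation and gives no proof of its own, recording only, in \Cref{R:ind.form.blocks}, the two structural facts extracted from the cited proof: block independence and the canonical form $\pi_i = kk(k+1)\cdots(k+\ell-2)$. Your facts (i) and (ii) are exactly these two observations, and your derivation of them is sound: $a_i'\leq i$ from the parking-function criterion, $a_i'\geq i-1$ from the unit-displacement pigeonhole on $[1,a_i'+1]$, and the spot-counting argument that forces each block's cars into the range $[s_j,s_j+|\pi_j|-1]$. The uniqueness lemma (working backward from the observation that only the preference-$(m-1)$ car can reach spot $m$) is the right key step for the ``only if'' direction and is correct, and it does double duty by showing that within each block the values of any UPF rearrangement must appear in weakly increasing order, which makes part (2) the clean multinomial count of interleavings you describe. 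The one place you are slightly compressed is the forward direction: the assertion that ``no cross-block interaction can occur'' for $\sigma$ cannot be read off from fact (ii), which was proved only for tuples already known to be UPFs; you need a short induction along the arrival order of $\sigma$ showing that at every stage the parked cars of block $j$ occupy an initial segment of $[s_j,s_j+|\pi_j|-1]$, so that each arriving car parks at its preference or one past it. That induction is routine given your setup, so I regard the proof as correct and complete modulo writing out that step.
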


\begin{remark} \label{R:ind.form.blocks}
From the proof of the previous theorem, we highlight two key observations: 
\begin{enumerate}
    \item Each block in the block structure of a unit interval parking function $\alpha$ is independent. That is, a car $c_i$ in $\alpha$ finds its parking spot occupied only by other cars within the same block of $c_i$. 
    \item The general form of every block $\pi_i$ in the block structure of some $\alpha\in \UPF_n$ is $\pi_i= k k (k+1)\cdots (k+\ell-2),$ where $k=\min \pi_i$ and $\ell=|\pi_i|$. 
    \end{enumerate}
\end{remark}

Bradt et al.\ \cite[Theorem 2.5]{UPF}  provide a bijection between $\FR_n$ and $\UPF_n$ for $n\geq 1$ using two special functions that relate the block structure of a unit interval parking function to the different ranks of a Fubini ranking. These functions are defined as follows, see~\cite[Lemma 2.6]{UPF}. The function  
\begin{align}
\phi: \FR_n \to \UPF_n\label{phi}
\end{align}
is defined by  $\phi(\beta)=(a_1,\dots,a_n)$, where $\beta=(b_1,\dots,b_n)\in \FR_n$ and
\[
a_i =
\begin{cases}
    b_i, & \text{ if } \hspace{0.3cm} |\{ j\in[i] : b_j=b_i \}|=1;\\
    b_i+k-2, & \text{ if } \hspace{0.3cm} |\{ j\in[i] : b_j=b_i \}|=k>1.
\end{cases}
\]
Similarly, the function \begin{align}
\psi: \UPF_n\to \FR_n\label{psi}
\end{align}is defined by $\psi(\alpha)=(b_1,\dots,b_n)$, for each $\alpha=(a_1,\dots,a_n) \in \UPF_n$, where $b_i= \min \{ a\in \pi_j: a_i\in \pi_j \}$ and $\pi_j$ is the $j$-th block of the block structure of $\alpha$.

Let us give an example of how these functions work.
\begin{example}
Consider  $\alpha=(2,2,5,3,5,6,1,7,8)\in \UPF_9$. The block structure of $\alpha$ is $\alpha'= 1|223|5567=\pi_1|\pi_2|\pi_3$. Applying $\psi$ to $\alpha$, we obtain
\[
\psi((2,2,5,3,5,6,1,7,8))=(2,2,5,2,5,5,1,5,5).
\]
Finally, applying $\phi$ to $\psi(\alpha)$ recovers $\alpha$, as expected.
\end{example}

It is important to note that there exists a correspondence between the blocks in the ordered set partitions and the ranks in the Fubini rankings. The number of blocks is equal to the number of distinct ranks, and the number of elements in a given block corresponds to the number of ties at the associated rank in the Fubini ranking.

Combining \Cref{T1} with the fact that $|\UPF_n|=\Fub_n=|\B_n|$, Bradt et al.\ obtain the following identity for the Fubini numbers, see~\cite[Corollary 2.13]{UPF}: For  $n\geq 1$, let $(c_1,\dots,c_k)\vDash n$ denote a composition of $n$ into  $k$ parts. Then, as described in 
\[
\Fub_n = \sum_{k=1}^n \sum_{(c_1,\dots,c_k)\vDash n} \binom{n}{c_1,\dots,c_k}.
\]

With these preliminary results at hand, we are now ready to introduce restricted unit interval parking functions and restricted Fubini rankings. 
\section{Restricted Fubini rankings and Restricted Unit Interval Parking Functions of Type 1}\label{sec:restricted UPF Type 1}

In this section, we introduce the notion of \emph{lucky cars} and describe their role in unit interval parking functions. 
This concept is based on the block structure of elements in $\UPF_n$ as was  described by Bradt et al.\ in~\cite{UPF}. 

\begin{definition}\label{def:lucky car}
Let $\alpha=(a_1,\dots,a_n)\in \PF_n$. We say that car $i$ is a \textit{lucky car} if it parks in its preferred spot $a_i$.
\end{definition}

For example, in the parking function $\alpha=(5,2,1,1,3,5,4)$, cars 1, 2, and 3 are lucky. 
Whenever the parking function is also a unit interval parking function, 
the number of blocks in its block structure is the same as the number of lucky cars. 
We prove this next.

\begin{theorem}\label{Luckyblocks}
Let $\alpha \in \UPF_n$. Then, the number of lucky cars is equal to  the  number of blocks in the block structure of $\alpha$.
\end{theorem}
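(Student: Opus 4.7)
The plan is to use \Cref{R:ind.form.blocks} to reduce the claim to a block-by-block analysis, showing that each block in the block structure of $\alpha$ contributes exactly one lucky car. Let $\alpha'=\pi_1|\pi_2|\cdots|\pi_m$ denote the block structure of $\alpha$, and for each $i$ write $k_i=\min\pi_i$ and $\ell_i=|\pi_i|$. \Cref{R:ind.form.blocks} tells us that (i) the cars whose preferences lie in $\pi_i$ interact only with each other during the parking process and fill exactly the spots $k_i,k_i+1,\ldots,k_i+\ell_i-1$, and (ii) the multiset of their preferences equals $\{k_i,k_i,k_i+1,\ldots,k_i+\ell_i-2\}$ when $\ell_i\geq 2$, or $\{k_i\}$ when $\ell_i=1$.

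The heart of the argument is to show that each block contributes exactly one lucky car. The case $\ell_i=1$ is immediate: the unique car prefers $k_i$ and parks in the free spot $k_i$. For $\ell_i\geq 2$, I would deduce the final placement from the unit interval constraint alone. The two cars with preference $k_i$ can each park only at $k_i$ or $k_i+1$, so they occupy these two spots, yielding exactly one lucky car (the one at $k_i$). For $j=1,\ldots,\ell_i-2$, I claim inductively that the car with preference $k_i+j$ parks at $k_i+j+1$: its two admissible options are $k_i+j$ and $k_i+j+1$, and the former is already occupied---by the second $k_i$-car when $j=1$, and by the $(k_i+j-1)$-car when $j\geq 2$ by the inductive hypothesis---so this car lands at $k_i+j+1$ and is unlucky. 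Hence each block of size $\ell_i\geq 2$ also contributes exactly one lucky car.

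Summing across all $m$ blocks then yields that the total number of lucky cars in $\alpha$ equals $m$, the number of blocks in the block structure of $\alpha$, which is the desired conclusion. The main subtlety to address is justifying that the matching of cars to spots within a single block is uniquely determined, independently of the order in which those cars appear in $\alpha$; this follows from the rigid structure of the block, since each car has only two admissible spots under the unit interval constraint, the two $k_i$-cars are forced to fill $\{k_i,k_i+1\}$, and this forcing propagates through the block leaving no flexibility in the assignment.
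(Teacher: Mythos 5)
Your proposal is correct and follows essentially the same route as the paper: both decompose $\alpha$ via \Cref{R:ind.form.blocks} into independent blocks of the form $k,k,k+1,\dots,k+\ell-2$ and show each block contributes exactly one lucky car. The only (minor) difference is that the paper simulates the parking process with the block's cars arriving in their forced relative order, whereas you deduce the final car-to-spot assignment directly from the unit-interval constraint by a forcing argument; both are valid and rest on the same lemmas.
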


\begin{proof}
Let $\alpha=(a_1,\dots,a_n)\in \UPF_n$, and let  $\alpha^{\uparrow}=(a_1',\dots,a_n')$ be its weakly increasing rearrangement. 
Denote the block structure of $\alpha$  by  $\alpha'=\pi_1| \cdots |\pi_m$.  
From \Cref{T1}, we know that the relative order of the elements within each block $\pi_i$ is preserved in $\alpha$. 
Moreover, by \Cref{R:ind.form.blocks}, the blocks of $\alpha$ are independent, and each block $\pi_i$ has the form $\pi_i=(k,k,k+1,\dots,k+\ell-2)$, where $\ell=|\pi_i|$ is the number of elements in $\pi_i$, and $k=\min \pi_i$.  
Since the first car in each block can always park in its preferred spot,  $\alpha$ has at least $m$ lucky cars. 

On the other hand, suppose that cars $i_1,\dots,i_\ell$ are the $\ell$ cars with preferences in $\pi_i$, where $i_1 < \cdots <i_\ell$.  
Then, their parking preferences are $a_{i_1}=k$, $a_{i_2}=k,\dots,a_{i_\ell}=k+\ell-2$. 
Hence, the parking process proceeds as follows: car ${i_1}$ parks in spot $k$. Car ${i_2}$ attempts to park in spot $k$, but since it is occupied, it parks in spot $k+1$. Car ${i_3}$ parks at the next available  spot, which is numbered $k+2$,  and so on. 
Thus,  the only lucky car in $\pi_i$ is car ${i_1}$. 
Since each block $\pi_i$ contributes exactly one lucky car, it follows that the number of lucky cars in $\alpha$ is exactly the number of blocks, which is $m$, as claimed.
\end{proof}

We now introduce our main object of study in this section, which is unit interval parking functions with a number of lucky cars in a given set of integers.

\begin{definition}\label{def:restricted UPF  type 1}
Let $S$ be a subset of positive integers. We define $\UPFTypeOne_{n,S}$  as the set of unit interval parking functions of length $n$ such that the number of lucky cars belongs to $S$. If $\alpha \in \UPFTypeOne_{n,S}$, we say that $\alpha$ is \emph{$S$-restricted unit interval parking functions of type 1}. 
\end{definition}

For example, consider $\alpha=(1,4,4,1,5,2,7,6)\in \UPFTypeOne_n$. The block structure of $\alpha$ is $\alpha'=112|4456|7$, which, by \Cref{Luckyblocks}, implies that $\alpha$ has exactly three lucky cars. Hence, $\alpha\in\UPFTypeOne_{8,\{ 1,2,3 \}}$. By contrast, let  $\beta=(1,3,4,7,4,6,2,7)$. Since $\beta$ has six lucky cars, we conclude that $\beta \notin\UPFTypeOne_{8,\{ 1,2,3 \}}$.

\begin{definition}\label{def:restricted FR type 1} We define $\FRTypeOne_{n,S}$ as the set of Fubini rankings with $n$ competitors such that the number of distinct ranks belongs to $S$.
The set $\FRTypeOne_{n,S}$, is called the set of \emph{$S$-restricted Fubini rankings of type 1}.
\end{definition}

For example, consider $\gamma=(1,1,1,4,5,6,7,7,9)\in\FR_9$, which has 6 ranks, and so $\gamma\in\FRTypeOne_{9,S}$, for any $S\subseteq\mathbb{Z}^+$ provided $6\in S$. However, $\gamma\notin \FRTypeOne_{9,\{1,5\}}$, as $6\notin \{1,5\}$.

In the proof of the following result we use the functions $\phi$ and $\psi$ defined in \eqref{phi} and \eqref{psi}, respectively.
\begin{theorem}\label{biyection}
For all $n \in \Z^+$ and any set $S$ of positive integers, the set $\FRTypeOne_{n,S}$ is in bijection with $ \UPFTypeOne_{n,S}$.
\end{theorem}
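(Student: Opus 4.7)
The plan is to verify that the bijection $\phi: \FR_n \to \UPF_n$ (with inverse $\psi$) from Bradt et al., defined in \eqref{phi} and \eqref{psi}, restricts to a bijection between $\FRTypeOne_{n,S}$ and $\UPFTypeOne_{n,S}$. Since $\phi$ and $\psi$ are already known to be mutually inverse on the full sets, the only thing I need to check is that the defining statistic is preserved: namely, that for every $\beta\in \FR_n$ the number of distinct ranks of $\beta$ equals the number of lucky cars of $\phi(\beta)$.

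First, for any $\alpha=(a_1,\ldots,a_n)\in \UPF_n$ with block structure $\alpha'=\pi_1|\pi_2|\cdots|\pi_m$, \Cref{Luckyblocks} says the number of lucky cars of $\alpha$ is exactly $m$. Next, by the definition of $\psi$, the entries of $\psi(\alpha)$ are the minima of the blocks containing the corresponding entries of $\alpha$. By \Cref{R:ind.form.blocks}, the blocks are contiguous and disjoint with $\min \pi_j = 1+\sum_{i<j}|\pi_i|$, so the minima $\min\pi_1,\ldots,\min\pi_m$ are pairwise distinct. Hence the set of distinct ranks of $\psi(\alpha)$ is exactly $\{\min\pi_1,\ldots,\min\pi_m\}$, a set of cardinality $m$.

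Combining these two observations, for every $\alpha\in \UPF_n$ we have the chain of equalities
\[
\#\{\text{lucky cars of }\alpha\} \;=\; m \;=\; \#\{\text{distinct ranks of }\psi(\alpha)\}.
\]
Therefore $\alpha\in \UPFTypeOne_{n,S}$ if and only if $\psi(\alpha)\in \FRTypeOne_{n,S}$, and since $\phi=\psi^{-1}$, equivalently $\beta\in\FRTypeOne_{n,S}$ if and only if $\phi(\beta)\in \UPFTypeOne_{n,S}$. Thus the restriction of $\phi$ is the desired bijection.

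The main obstacle is minimal here: the heavy lifting has already been done by \Cref{T1}, \Cref{Luckyblocks}, and the explicit construction of $\phi$ and $\psi$ in \cite{UPF}. The proof essentially amounts to assembling these ingredients and verifying the statistic-preservation claim; the only subtlety worth pointing out in writing is that the distinct values of $\psi(\alpha)$ really are in bijection with the blocks (not merely bounded in number), which follows from the explicit block-minima description above.
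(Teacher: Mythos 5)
Your proposal is correct and follows essentially the same route as the paper: both verify that the Bradt et al.\ bijection $\phi$ (with inverse $\psi$) sends the ``number of distinct ranks'' statistic to the ``number of lucky cars'' statistic via \Cref{Luckyblocks} and the block structure. Your explicit justification that the block minima $\min\pi_1,\ldots,\min\pi_m$ are pairwise distinct is a nice touch where the paper instead cites Lemma 2.11 of \cite{UPF}, but the argument is the same in substance.
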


\begin{proof}
Given $\alpha \in \UPFTypeOne_{n,S}$, we know from \Cref{Luckyblocks} that the number of blocks in $\alpha'$ (the block structure of $\alpha$) belongs to $S$. According to~\cite[Lemma 2.11]{UPF}, each block in $\alpha'$ is mapped by $\psi$ to a list of identical entries in $\psi(\alpha)$.  These lists determine the  distinct  ranks in the Fubini ranking $\psi(\alpha)$, which implies that  $ \psi(\alpha) \in \FRTypeOne_{n,S}$. 

Conversely, let $\beta \in \FRTypeOne_{n,S}$. Then, there exist $k$ distinct values among the entries of  $\beta$ for some $k \in S$. Each list of all entries with a unique different value of $\beta$ (what we call \emph{competitors in the same rank}) is transformed by $\phi$ into a block of the unit interval parking function $\phi(\beta)$. Since all blocks of  $\phi(\beta)$ are generated in this way, we conclude, by \Cref{Luckyblocks}, that $\phi(\beta) \in \UPFTypeOne_{n,S}$.
\end{proof}

For a fixed subset $S\subseteq\mathbb{Z}^+$, we can give a closed formula for the exponential generating function for $\{|\UPFTypeOne_{n,S}|\}_{n\geq 1}$.

\begin{theorem}\label{thm:egf UPF type 1}
If $S\subseteq\mathbb{Z}^+$, then
$$\sum_{n\geq 0} |\UPFTypeOne_{n,S}| \frac{x^n}{n!}=\sum_{k \in S} (e^x -1)^k.$$ 
\end{theorem}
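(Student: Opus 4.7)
The plan is to combine the bijections already established with the classical exponential generating function for ordered set partitions having a prescribed number of blocks. The argument reduces to three short steps.

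First, by \Cref{biyection}, $|\UPFTypeOne_{n,S}|=|\FRTypeOne_{n,S}|$, so it suffices to compute $\sum_{n\geq 0}|\FRTypeOne_{n,S}|\frac{x^n}{n!}$. Next, recall from \Cref{sec:Preliminar} the natural bijection between $\FR_n$ and $\B_n$: a Fubini ranking with exactly $k$ distinct ranks corresponds to an ordered set partition with exactly $k$ blocks (the rank values match the cumulative sums $1+\sum_{\ell=1}^{j-1}|B_\ell|$). Consequently, $\alpha\in\FRTypeOne_{n,S}$ if and only if the associated ordered set partition has a number of blocks in $S$, so
$$|\FRTypeOne_{n,S}|=\sum_{k\in S} k!\,\sts{n}{k}.$$

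Finally, I would invoke the classical identity
$$\sum_{n\geq 0} k!\,\sts{n}{k}\,\frac{x^n}{n!}=(e^x-1)^k,$$
which follows either from the exponential formula or from a direct combinatorial argument: $e^x-1$ is the EGF of nonempty subsets of a labeled set, and the product $(e^x-1)^k$ is the EGF of ordered $k$-tuples of pairwise disjoint nonempty subsets whose union is an $n$-element labeled set, i.e., ordered set partitions of $[n]$ into $k$ blocks. Interchanging the two summations yields
$$\sum_{n\geq 0}|\UPFTypeOne_{n,S}|\frac{x^n}{n!}=\sum_{k\in S}\sum_{n\geq 0} k!\,\sts{n}{k}\frac{x^n}{n!}=\sum_{k\in S}(e^x-1)^k.$$

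There is no real obstacle here; every step is either a prior result or a standard EGF computation. The only point that needs a brief remark is that when $S$ is infinite the sum $\sum_{k\in S}(e^x-1)^k$ must be interpreted in the ring of formal power series. This is legitimate because $(e^x-1)^k$ has valuation $k$ in $x$, so the coefficient of $x^n$ on the right side is a finite sum involving only those $k\in S$ with $k\leq n$, matching the finite sum on the left. Thus the interchange of summations is justified and the identity holds as an equality of formal power series.
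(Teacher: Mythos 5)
Your proposal is correct and follows essentially the same route as the paper: both reduce via \Cref{biyection} to the identity $|\UPFTypeOne_{n,S}|=\sum_{k\in S}k!\,\sts{n}{k}$ and then invoke the classical e.g.f.\ $(e^x-1)^k$ for ordered set partitions into $k$ blocks. Your additional remark on the valuation of $(e^x-1)^k$ justifying the interchange of sums for infinite $S$ is a welcome precision the paper leaves implicit.
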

\begin{proof}
We recall that unit interval parking functions are counted by Fubini numbers, meaning that for all $n\geq 0$,  we have $\Fub_n=|\UPF_n|$. Now, from the bijection given in \Cref{biyection},  it follows that  $|\UPFTypeOne_{n,S}|= \sum_{k \in S}k! \sts{n}{k}$. Since the exponential generating function (e.g.f.) of the sequence $\{k! \left\{ \sts{n}{k} \right\}\}_{n \geq 0}$ is given by $(e^x -1)^k$, we obtain that 
\[\sum_{n\geq 0} |\UPFTypeOne_{n,S}| \frac{x^n}{n!}=\sum_{k \in S} (e^x -1)^k.\qedhere\]
\end{proof}
We now specialize to certain subsets of the integers and give the e.g.f.\ for those cases. Remember, 
that  $|\UPFTypeOne_{0,S}|=|\{\emptyset \}|=1$. 

\begin{corollary}\label{lemma:egf examples} The closed-form e.g.f.\ of the counting sequence $\{ |\UPFTypeOne_{n,S}|\}_{n \geq 0}$ for some special choices of \( S \subseteq\mathbb{Z}^+\) are as follows.
\begin{enumerate}
\item If $S=\{k \in \Z^+ : k$ is even$\}$, then 
\begin{align*}
\sum_{k \in S} (e^x -1)^k = \sum_{k \geq 0} (e^x -1)^{2k}= \frac{1}{1-(e^x-1)^2}.
\end{align*}
The first few terms of this sequence correspond to OEIS sequence   \seqnum{A052841}:
\[1,  0,  2,   6,   38,   270,   2342,   23646,  272918,  3543630,  51123782,  \dots
\] 
\item  If $S=\{k \in \Z^+ : k$ is odd$\}$, then 
\begin{align*}
\sum_{k \in S} (e^x -1)^k = \sum_{k \geq 0} (e^x -1)^{2k+1}= \frac{e^x -1}{1-(e^x-1)^2}.
\end{align*}
The first few terms of this sequence correspond to OEIS sequence \seqnum{A089677}:
\[
1,  1,  7,  37,  271,  2341,  23647,  272917,  3543631,  51123781, \dots
\]
\item  Fix $m\in\mathbb{Z}^+$. If $S=\{k \in \Z^+ : k \geq m\}$ or $S=\{k \in \Z^+ : k \leq m\}$, then the corresponding generating functions are 
\begin{align*}
\sum_{k \in S} (e^x -1)^k &= \sum_{k \geq m} (e^x -1)^{k}= \frac{(e^x -1)^m}{2-e^x}\\
\intertext{and}
\sum_{k \in S} (e^x -1)^k &= \sum_{k = 1}^m (e^x -1)^{k}= \frac{1- (e^x -1)^{m+1}}{2-e^x},\intertext{respectively.}
\end{align*} 
\end{enumerate}
\end{corollary}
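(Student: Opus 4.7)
The plan is to apply \Cref{thm:egf UPF type 1} directly: for each given $S$, one needs only to evaluate $\sum_{k \in S}(e^x-1)^k$ in closed form. Setting $u = e^x-1$, each of the three cases reduces to summing a (finite or infinite) geometric series in $u$ or $u^2$, with the constant-term convention $|\UPFTypeOne_{0,S}| = 1$ (stated just before the corollary) used to adjoin a $u^0 = 1$ term whenever the smallest exponent produced by the series is at least $2$.

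For part (1), write the even positive integers as $\{2j : j \geq 1\}$, so $\sum_{k \in S}u^k = \sum_{j \geq 1}u^{2j}$; adjoining the $j = 0$ summand gives the geometric series $\sum_{j \geq 0}u^{2j}$ with ratio $u^2$, which sums to $\frac{1}{1-u^2}$. For part (2), the odd positive integers are $\{2j+1 : j \geq 0\}$, so $\sum_{k \in S}u^k = u \sum_{j \geq 0}u^{2j} = \frac{u}{1-u^2}$, giving the stated closed form directly, since the series already produces the exponent $1$.

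For part (3), when $S = \{k : k \geq m\}$, factor $u^m$ out of $\sum_{k \geq m}u^k$ and sum the geometric series $\sum_{j \geq 0}u^j = \frac{1}{1-u} = \frac{1}{2-e^x}$, obtaining $\frac{(e^x-1)^m}{2-e^x}$. When $S = \{k : 1 \leq k \leq m\}$, adjoining the $k=0$ term converts the finite sum $\sum_{k=1}^{m}u^k$ into the partial geometric sum $\sum_{k=0}^{m}u^k = \frac{1-u^{m+1}}{1-u}$, yielding $\frac{1-(e^x-1)^{m+1}}{2-e^x}$.

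There is no substantive obstacle: the calculation is routine, and the only bookkeeping point is the consistent treatment of the empty parking function via the convention $|\UPFTypeOne_{0,S}| = 1$, which justifies adjoining the $u^0$ term where needed. Each case then becomes a standard geometric series whose closed form is a simple rational expression in $e^x$.
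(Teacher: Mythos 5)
Your proposal is correct and matches the paper's proof, which likewise reduces everything to the geometric series $\sum_{k\geq 0}x^k = 1/(1-x)$ and the finite geometric sum $\sum_{k=0}^{m}x^k = (1-x^{m+1})/(1-x)$ applied to $u=e^x-1$. You are in fact more explicit than the paper about the bookkeeping of adjoining the $u^0$ term via the convention $|\UPFTypeOne_{0,S}|=1$, which is a welcome clarification but not a different method.
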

\begin{proof}
{All of the statements are proved by using the geometric series $1/(1-x)=\sum_{k\geq 0} x^k$ and the finite geometric sum $(1-x^{m+1})/(1-x)=\sum_{k=0}^m x^k$.}
\end{proof}

\begin{remark}
The sum of the sequences appearing in items (1) and (2) of \Cref{lemma:egf examples} yields the Fubini numbers.
\end{remark}

We can also obtain some closed formulas for these functions.
\begin{corollary}\label{cor:S all the evens}
For all $n\geq 0$, if $S=2\mathbb{Z}^+$, then 
\begin{align*}
|\UPFTypeOne_{n, 2\Z^+}|= \sum_{k=1}^{\floor{n/2}} \sum_{i=0}^{2k} \binom{2k}{i} (-1)^i i^n.
\end{align*}
\end{corollary}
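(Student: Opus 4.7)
The plan is to reduce the statement to a standard closed form for the number of Fubini rankings with a prescribed number of ranks, by combining Theorem~\ref{biyection} (or equivalently the identity derived in the proof of Theorem~\ref{thm:egf UPF type 1}) with the classical explicit formula for the Stirling numbers of the second kind.

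First, I would invoke Theorem~\ref{biyection} to replace the count $|\UPFTypeOne_{n,S}|$ by $|\FRTypeOne_{n,S}|$. Since a Fubini ranking with $n$ competitors and exactly $k$ distinct ranks corresponds (via the standard bijection with ordered set partitions recalled in Section~\ref{sec:Preliminar}) to an ordered partition of $[n]$ into $k$ blocks, there are $k!\sts{n}{k}$ such rankings. Consequently, for any $S\subseteq \mathbb{Z}^+$,
\begin{align*}
|\UPFTypeOne_{n,S}| \;=\; \sum_{k\in S} k!\,\sts{n}{k}.
\end{align*}
This identity was already recorded inside the proof of Theorem~\ref{thm:egf UPF type 1}, so no new argument is needed for this step.

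Second, I would specialize to $S=2\mathbb{Z}^+$ and truncate the resulting sum. Because $\sts{n}{k}=0$ whenever $k>n$, the only surviving even indices $k=2m$ are those with $2m\le n$, i.e.\ $1\le m \le \floor{n/2}$. Hence
\begin{align*}
|\UPFTypeOne_{n,2\Z^+}| \;=\; \sum_{m=1}^{\floor{n/2}} (2m)!\,\sts{n}{2m}.
\end{align*}
Now I would apply the well-known identity
\begin{align*}
k!\,\sts{n}{k} \;=\; \sum_{i=0}^{k}(-1)^{k-i}\binom{k}{i}i^{n},
\end{align*}
with $k$ replaced by $2m$. Using that $(-1)^{2m-i}=(-1)^{-i}=(-1)^{i}$, the inner sum becomes $\sum_{i=0}^{2m}(-1)^{i}\binom{2m}{i}i^{n}$, and substituting yields exactly the claimed formula.

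No real obstacle is expected: everything reduces to the Stirling-number expansion and to the observation that $\sts{n}{k}$ vanishes above the diagonal, which controls the upper limit $\floor{n/2}$. The only point that merits a one-line justification in the write-up is the sign simplification $(-1)^{2m-i}=(-1)^{i}$, since the statement in the corollary drops the parity of $2m$ without comment.
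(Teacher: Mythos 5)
Your proposal is correct and arrives at the formula by essentially the same computation as the paper: the paper expands $(e^x-1)^{2k}$ by the binomial theorem and extracts the coefficient of $x^n/n!$ from the generating function of Theorem~\ref{thm:egf UPF type 1}, while you instead start from the identity $|\UPFTypeOne_{n,S}|=\sum_{k\in S}k!\sts{n}{k}$ and quote the explicit formula $k!\sts{n}{k}=\sum_{i=0}^{k}(-1)^{k-i}\binom{k}{i}i^n$, which is the same binomial expansion in disguise. Your truncation at $\floor{n/2}$ via the vanishing of $\sts{n}{k}$ for $k>n$ and the sign simplification $(-1)^{2m-i}=(-1)^i$ are both sound, so nothing is missing.
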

\begin{proof}
To determine $|\UPFTypeOne_{n, 2\Z^+}|$ for all $n \geq 0$, we can first expand $(e^x-1)^{2k}$ using the binomial theorem for a fixed integer $k$ with $2k \leq n$. Doing so yields
\begin{align*}
(e^x-1)^{2k}= \sum_{i=0}^{2k} \binom{2k}{i}e^{ix}(-1)^{2k-i}
&= \sum_{n \geq 2k}\left(\sum_{i=0}^{2k} \binom{2k}{i} (-1)^i i^n\right) \frac{x^n}{n!}.
\end{align*}
Therefore, 
\[
|\UPFTypeOne_{n, 2\Z^+}|= \sum_{k=1}^{\floor{n/2}} \sum_{i=0}^{2k} \binom{2k}{i} (-1)^i i^n.\qedhere
\]
\end{proof}

In a similar manner, one can establish the following result using exponential generating functions. However, we present a combinatorial proof instead.

\begin{theorem}\label{thm:UPF type 1 set of evens}
Let $S\subseteq 2\mathbb{Z}^+$ be a set of positive integers. Then
\[
|\UPFTypeOne_{n,S}| = \sum_{m\in S} \sum_{j=1}^m (-1)^{m-j} \binom{m}{j} j^n.
\]
\end{theorem}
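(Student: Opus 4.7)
The plan is to reduce the counting of $\UPFTypeOne_{n,S}$ to a standard combinatorial identity counting surjections, using the bijections established earlier in the paper. First, I would apply \Cref{biyection} to pass from $\UPFTypeOne_{n,S}$ to $\FRTypeOne_{n,S}$. Then, using the natural bijection between Fubini rankings of length $n$ with exactly $m$ distinct ranks and ordered set partitions of $[n]$ into $m$ blocks (recalled in \Cref{sec:Preliminar}), I would write
\[
|\UPFTypeOne_{n,S}| \;=\; |\FRTypeOne_{n,S}| \;=\; \sum_{m \in S} m! \sts{n}{m}.
\]

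Next, I would give a combinatorial interpretation of $m!\sts{n}{m}$ as the number of surjections $[n] \twoheadrightarrow [m]$, obtained by sending an ordered set partition $(B_1,\ldots,B_m)$ of $[n]$ to the function $f$ with $f(i)=j$ whenever $i\in B_j$. I would then count surjections from $[n]$ to $[m]$ by inclusion-exclusion on the complement of the image: letting $A_i$ be the set of functions $[n]\to [m]$ that miss the element $i\in [m]$, the number of surjections is
\[
m^n \;-\; \Bigl|\bigcup_{i=1}^m A_i\Bigr| \;=\; \sum_{j=0}^{m} (-1)^{m-j}\binom{m}{j} j^n,
\]
and since the $j=0$ term is $0^n=0$ for $n\geq 1$, the sum may be started at $j=1$.

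Finally, I would combine both steps to conclude
\[
|\UPFTypeOne_{n,S}| \;=\; \sum_{m\in S}\,m!\sts{n}{m} \;=\; \sum_{m\in S}\sum_{j=1}^{m}(-1)^{m-j}\binom{m}{j} j^n,
\]
which is the desired identity. There is no real obstacle in this argument: every step relies on a result already established in the paper or on the classical inclusion-exclusion formula for surjections, and the hypothesis $S\subseteq 2\mathbb{Z}^+$ plays no special role in the derivation (it is stated only to match the setting of the preceding results, such as \Cref{cor:S all the evens}). The only minor care needed is to drop the $j=0$ summand cleanly, which is justified because $n \geq 1$ in all nontrivial cases (for $n=0$ the identity $|\UPFTypeOne_{0,S}|=0$ for $S\neq\emptyset$ and both sides vanish, consistent with the empty-set convention).
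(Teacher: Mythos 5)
Your proposal is correct and follows essentially the same route as the paper: both arguments reduce the problem to counting surjections from $[n]$ onto $[m]$ (the paper via ``block preference lists'' in which every block is chosen by at least one car, you via ordered set partitions and the identity $m!\sts{n}{m}$), and both finish with the same inclusion--exclusion computation yielding $\sum_{j=0}^{m}(-1)^{m-j}\binom{m}{j}j^n$. Your observation that the hypothesis $S\subseteq 2\mathbb{Z}^+$ is not actually used is also accurate.
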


\begin{proof}
First, by the principle of addition, it is enough to prove that the number of unit interval parking functions in $\UPF_n$ with exactly $m$ lucky cars is given by
\[
\sum_{j=1}^m (-1)^{m-j} \binom{m}{j} j^n.
\]
We prove this using the principle of inclusion-exclusion. 

By \Cref{Luckyblocks}, a unit interval parking function $\alpha\in\UPF_n$ has $m$ lucky cars if and only if the number of blocks in $\alpha$'s block structure is $m$. 
Thus, we seek to count the number of unit interval parking functions whose block structure consists of exactly $m$ blocks.
Moreover, by \Cref{T1}, it is not necessary for all  cars to specify their favorite spot, it suffices to determine  which block they prefer, since the cars within each block park in the relative order of their block and the structure of the block (see  \Cref{R:ind.form.blocks}). 

Thus, we count the number of list of block preferences in which every block is chosen by at least one car. In other words,  we are counting the size of the set
\[
S_n = \{ (b_1,\dots,b_n)\in [m]^n : \text{for every $1\leq i\leq m$ there exist $1\leq k\leq n$ such that $b_k=i$}\}.
\]
Equivalently, we have \( |\UPFTypeOne_{n,\{m\}}| = |S_n| \).  

Now, define 
\[
A_i = \{ (b_1,\dots,b_n)\in [m]^n : \text{there is no $1\leq k\leq n$ such that $b_k=i$}\}
\]
for each $1\leq i\leq m$. Since $S_n = [m]^n - \bigcup_{n=1}^m A_i$, and, it follows that \begin{align}
|\UPFTypeOne_{n,\{m\}}| = m^n - \left|\bigcup_{n=1}^m A_i \right|\label{previous expression}.
\end{align}
Applying the principle of inclusion-exclusion, we obtain
\[
\left|\bigcup_{i=0}^m A_i\right| = \sum_{j=1}^m (-1)^{j+1} 
\sum_{1\leq i_1< \cdots <i_j\leq m} \left| A_{i_1} \cap \cdots \cap A_{i_j} \right|.
\]
To compute  $|A_{i_1} \cap \cdots \cap A_{i_j}|$, suppose $1\leq i_1< \cdots <i_j\leq m$ and observe that the elements of this intersection are $n$-tuples  $(a_1,\dots,a_n)\in [m]^n$ such that  $a_k\notin \{i_1,\dots,i_j\}$ for every $1\leq k\leq n$.  This implies $A_{i_1} \cap \cdots \cap A_{i_j}=([m]-\{i_1,\dots,i_j\})^n$, so its size is given by  $$|A_{i_1} \cap \cdots \cap A_{i_j}|=(m-j)^n.$$ 
Thus, we obtain
\begin{align}
\left|\bigcup_{i=0}^m A_i\right| &= \sum_{j=1}^m (-1)^{j+1} \sum_{1\leq i_1< \cdots <i_j\leq m} |A_{i_1} \cap \cdots \cap A_{i_j}|\nonumber\\
&= \sum_{j=1}^m (-1)^{j+1} \sum_{1\leq i_1< \cdots <i_j\leq m} (m-j)^n\nonumber \\
&= \sum_{j=1}^m (-1)^{j+1} \binom{m}{j} (m-j)^n.\label{sub this}
\end{align}

Finally, substituting \eqref{sub this} into \eqref{previous expression}, the previous expression for \( |\UPFTypeOne_{n,\{m\}}| \), we have
\begin{align*}
|\UPFTypeOne_{n,\{m\}}| &=
m^n-\sum_{j=1}^m (-1)^{j+1} \binom{m}{j} (m-j)^n=\sum_{j=0}^m (-1)^{j} \binom{m}{j} (m-j)^n.
\end{align*}
Using the symmetry of the binomial coefficient
$\binom {m}{j}=\binom {m}{m-j}$ for $ 0\leq j\leq m$,	
we  rearrange the sum to obtain 
\begin{align*}
|\UPFTypeOne_{n,\{m\}}| = \sum_{j=0}^m (-1)^{m-j} \binom{m}{j} j^n,
\end{align*}
which is the desired formula.
\end{proof}

\subsection{UPFs without cyclical adjacencies}

We consider a class of unit interval parking functions closely related to  $\UPFTypeOne_{n, 2\Z^+}$. Since unit interval parking functions of length $n$ correspond to ordered set partitions of $[n]$ (they are both counted by Fubini numbers), the set $\UPFTypeOne_{n, 2\mathbb{Z}^+}$ is in bijection with ordered set partitions of \([n]\) with an even number of blocks, recalling \Cref{Luckyblocks}. This motivates the introduction of a new class of ordered set partitions, see comments in the sequence \seqnum{A052841} in the OEIS.

\begin{definition}\label{def:no cyclical adjacencies}
Let $\bm{C}=(C_1,\ldots,C_k)$ be an ordered set partition of $[n]$. We say that $\bm{C}$ has \textit{no cyclical adjacencies} if, for every $i=1,\dots,k$ and every $j=1,\dots,n-1$, the following conditions hold:
\begin{enumerate}
 \item $j$ and $j+1$ do not belong to the same block $C_i$, and 
 \item $1$ and $n$ do not belong to the same block $C_i$.
\end{enumerate}
The set of all ordered set partitions of the set $[n]$ with no cyclical adjacencies is denoted by $\mathscr{O}_n$.
\end{definition}

For example, $(\{ 2, 4, 6 \}, \{ 3,5\}, \{1 \})$ and $(\{ 4, 6 \}, \{ 3,5\}, \{1,3 \})$ are elements of $\mathscr{O}_6$, while $(\{1,2,3\},\{4\},\{5,6\})\notin\mathscr{O}_6$.

Let $\mathscr{L}_n$ be the set of  ordered set partitions of  $[n]$ with an even number of blocks. Given any $\mathbf{B}=(B_1,B_2,\ldots,B_{2k}) \in \mathscr{L}_n$, we define  the relation $\prec$ on $[n]$ as follows:
\begin{itemize}
\item  $a \prec b$ if $a <n$ and $b=a+1$,
\item $n \prec 1$.
\end{itemize}
Whenever $a\prec b$, we refer to $a$ as the \textit{cyclical predecessor} of $b$. 

For the next theorem, we need the following function $\lambda_n: \mathscr{L}_n \rightarrow \mathscr{O}_n$  via the following algorithm:
\begin{enumerate}
\item Initialize $i=1$ and  set $B_{2k+1}=\emptyset$.
\item If $i\leq 2k$, proceed to step (3), otherwise, go to step (4).
\item For each chain of elements in $B_i$ of the form $j_1 j_2 \ldots j_t$  (with $t < n$),  where $j_{s} \prec j_{s+1}$ and $(j_s, j_{s+1})$ forms a cyclical adjacency for $s=1,\dots,t-1$, remove the elements $\{j_2, j_4, \ldots, j_{\gamma(t)}\}$ from $B_i$ and place them  in $B_{2k+1}$, where \[\gamma(t)=
\begin{cases}
    t, & \text{if } t \text{ is even;} \\
    t - 1, & \text{if } t \text{ is odd.}
\end{cases} \]
Increment $i$ and return to step (2).
 \item Finally, define $\lambda_n(\mathbf{B})=(B_1,B_2,\ldots,B_{2k},B_{2k+1})$.
\end{enumerate}

Clearly, $\lambda_n(\mathbf{B})$ is a partition of $[n]$. Moreover, every block in $\lambda_n(\mathbf{B})$ avoids cyclical adjacencies because we remove the adjacencies of the original blocks $B_1, B_2,\ldots,B_{2k}$ of $\mathbf{B}$ in the previous algorithm, and the block $B_{2k+1}$ consists of elements from different chains  (which are thus nonadjacent) or from chains where adjacency has been explicitly removed.  Hence, $\lambda_n(\mathbf{B}) \in \mathscr{O}_n$ proving that $\lambda_n$ is well defined.

Let us give an example of how this function works.

\begin{example}
We consider all elements of $\mathscr{L}_4$ that contain cyclical adjacencies and provide the image under $\lambda_4$:
\begin{align*}
&\lambda_4(\{1,2,3\}, \{ 4\})= (\{1,3\},\{ 4\},\{ 2\}),  \quad \lambda_4( \{ 4\}, \{1,2,3\})= (\{ 4\},\{1,3\},\{ 2\}),\\
&\lambda_4(\{1\}, \{2,3,4\}) = (\{1\}, \{2,4\}, \{3\}), \quad 
  \lambda_4(\{2,3,4\}, \{1\}) = (\{2,4\}, \{1\}, \{3\}),\\
&\lambda_4(\{1,2\}, \{3,4\}) = (\{1\}, \{3\}, \{2,4\}), \quad 
\lambda_4(\{3,4\}, \{1,2\}) = (\{3\}, \{1\}, \{2,4\}),\\
&\lambda_4(\{1,3,4\}, \{2\}) = (\{1,3\}, \{2\}, \{4\}), \quad \lambda_4(\{2\}, \{1,3,4\}) = (\{2\}, \{1,3\}, \{4\}),\\
&\lambda_4(\{1,4\}, \{2,3\}) = (\{4\}, \{2\}, \{1,3\}), \quad \lambda_4(\{2,3\}, \{1,4\}) = (\{2\}, \{4\}, \{1,3\}),\\
&\lambda_4(\{1,2,4\}, \{3\}) = (\{2,4\}, \{3\}, \{1\}), \quad 
 \lambda_4(\{3\}, \{1,2,4\}) = (\{3\}, \{2,4\}, \{1\}).
\end{align*}
\end{example}

\begin{theorem}\label{thm:cic1}  For all $n \in \Z^+$, the function $\lambda_n: \mathscr{L}_n \rightarrow \mathscr{O}_n$ is a bijection.
\end{theorem}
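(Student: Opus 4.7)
I will exhibit an explicit inverse $\mu_n\colon\mathscr{O}_n\to\mathscr{L}_n$ and verify the two identities $\mu_n\circ\lambda_n=\mathrm{id}_{\mathscr{L}_n}$ and $\lambda_n\circ\mu_n=\mathrm{id}_{\mathscr{O}_n}$. Given $\mathbf{C}=(C_1,\dots,C_m)\in\mathscr{O}_n$, define $\mu_n(\mathbf{C})$ as follows: if $m$ is even, set $\mu_n(\mathbf{C})=\mathbf{C}$; if $m=2k+1$ is odd, then for each $j\in C_m$ let $j'$ denote the cyclical predecessor of $j$ and let $i(j)\in\{1,\dots,2k\}$ be the unique index with $j'\in C_{i(j)}$, move $j$ into $C_{i(j)}$, and then delete the (now empty) last slot to obtain a sequence of $2k$ blocks, which lies in $\mathscr{L}_n$. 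This is well-defined because $j'\in C_m$ would force the cyclical adjacency $j'\prec j$ inside $C_m$, contradicting $\mathbf{C}\in\mathscr{O}_n$.

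\textbf{The identity $\mu_n\circ\lambda_n=\mathrm{id}_{\mathscr{L}_n}$.} Fix $\mathbf{B}=(B_1,\dots,B_{2k})\in\mathscr{L}_n$. In $\lambda_n(\mathbf{B})$, every maximal cyclical chain $b_1\prec b_2\prec\cdots\prec b_t$ in a block $B_i$ contributes its odd-indexed members to the new $i$-th block and its even-indexed members $b_2,b_4,\dots,b_{\gamma(t)}$ to a new last block $B_{2k+1}'$. If $B_{2k+1}'$ is empty, then $\mathbf{B}$ had no cyclical adjacencies, so $\lambda_n(\mathbf{B})=\mathbf{B}$ already has $2k$ blocks and $\mu_n$ acts as the identity. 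Otherwise each $b_s\in B_{2k+1}'$ with $s$ even has cyclical predecessor $b_{s-1}$ still sitting in the $i$-th block of $\lambda_n(\mathbf{B})$, so $\mu_n$ sends $b_s$ back to its original block and recovers $\mathbf{B}$.

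\textbf{The identity $\lambda_n\circ\mu_n=\mathrm{id}_{\mathscr{O}_n}$ and the main obstacle.} Fix $\mathbf{C}=(C_1,\dots,C_m)\in\mathscr{O}_n$. If $m$ is even, then $\mathbf{C}\in\mathscr{L}_n\cap\mathscr{O}_n$ has no cyclical adjacencies, so $\lambda_n(\mathbf{C})=\mathbf{C}$. If $m=2k+1$ is odd, the heart of the argument is the following claim: in $\mu_n(\mathbf{C})$, every maximal cyclical chain $c_1\prec c_2\prec\cdots\prec c_t$ inside a block $1\le i\le 2k$ alternates between elements original to $C_i$ at odd positions and elements moved in from $C_m$ at even positions. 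Two consecutive originals would yield a cyclical adjacency inside $C_i$, while two consecutive moved elements would yield one inside the original $C_m$; both contradict $\mathbf{C}\in\mathscr{O}_n$. The starting element $c_1$ must be original, since if it were moved then its cyclical predecessor would lie in the same block by construction of $\mu_n$, extending the chain backward and violating maximality. Granting this claim, $\lambda_n$ strips exactly the even-indexed (moved) elements of each chain into a new last block, whose union is $C_m$, and each $C_i$ is restored to its original content. The main technical obstacle is the careful bookkeeping in this alternation argument, particularly around the wrap-around adjacency $n\prec 1$ and the handling of chain endpoints that may straddle the boundary.
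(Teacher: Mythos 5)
Your proof is correct and rests on the same core construction as the paper: your $\mu_n$ is exactly the preimage map the paper builds in its surjectivity argument (move each element of an odd last block into the block of its cyclical predecessor). The only difference is organizational---you verify both composites and make the alternation claim explicit, which tightens the paper's rather terse separate injectivity step, but it is not a genuinely different route.
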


\begin{proof}
 \textit{Injectivity:}  Suppose $\mathbf{A} = (A_1, A_2, \dots, A_q)$ and $\mathbf{B} = (B_1, B_2, \dots, B_p)$ are distinct elements of $\mathscr{L}_n$, meaning there exists some $j \in [1, \max(p, q)]$ such that $A_j \neq B_j$.  By following the construction of $\lambda_n(\mathbf{A})$ and $\lambda_n(\mathbf{B})$, at least one of the following must hold: 
\begin{enumerate}
    \item The elements of $A_j$ in $\lambda_n(\mathbf{A})$ differ from those of $B_j$ in $\lambda_n(\mathbf{B})$.
    \item The elements of $A_{q+1}$ in $\lambda_n(\mathbf{A})$ differ from those of $B_{p+1}$ in $\lambda_n(\mathbf{B})$.
\end{enumerate}

Thus, $\lambda_n(\mathbf{A}) \neq \lambda_n(\mathbf{B})$, proving injectivity.

 \textit{Surjectivity:} Given any $\mathbf{C}=(C_1,C_2,\ldots,C_p) \in \mathscr{O}_n$, we construct a preimage $\mathbf{B} \in \mathscr{O}_n$ such that $\lambda_n(\mathbf{B})= \mathbf{C}$. If $\mathbf{C}$ has an even number of blocks, then $\mathbf{C} \in \mathscr{O}_n$, and by the  algorithm, $\lambda_n(\mathbf{C})=\mathbf{C}$. 

If $\mathbf{C}$ has an odd number of blocks, take the last block $C_p = \{ c_1, c_2, \dots, c_m \}$, where $m < n$ (since $\mathbf{C}$ does not contain all elements of $[n]$). Move each $c_l$ to the same block as its cyclical predecessor. The resulting partition has an even number of blocks, and following the algorithm, its image under $\lambda_n$ is precisely $\mathbf{C}$. 

Thus, $\lambda_n$ is bijective.
\end{proof}

\begin{definition}\label{def:UPF no cyclical adj} Let $\alpha=(a_1,\ldots,a_n) \in \UPF_n$ with block structure $\alpha'= \pi_1|\cdots| \pi_k$. We say that $\alpha$ \textit{has no cyclical adjacencies} if, for every $i=1,\ldots,k$ and every $j=1,\dots,n-1$, the following conditions hold:
\begin{itemize}
 \item $a_j$ and $a_{j+1}$ do not belong to  the same block $\pi_i$ simultaneously, and 
 \item $a_1$ and $a_n$ do not belong to same block $\pi_i$ simultaneously. 
\end{itemize}
 The set of all unit interval parking functions of length $n$ with no cyclical adjacencies is denoted by $\mathscr{O}^{\UPF}_n$.
\end{definition}

{An intuitive way to understand unit interval parking functions with no cyclical adjacencies, is to note that adjacent cars in the preference list do not compete to park between them.}

For example, consider $\alpha=(a_1,\ldots,a_7)=(1,4,1,6,2,6,4) \in \UPF_7$, which parks the cars as illustrated in Figure \ref{fig:apart in queue close to park}. The block structure of the unit interval parking function $\alpha=(1,4,1,6,2,6,4)$ is given by $112|44|66= a_1 a_3 a_5 | a_2 a_7 | a_4 a_6$, which satisfies the conditions in \Cref{def:UPF no cyclical adj}. Therefore, we conclude that  $\alpha \in \mathscr{O}^{\UPF}_7$.

\begin{figure}[ht!]
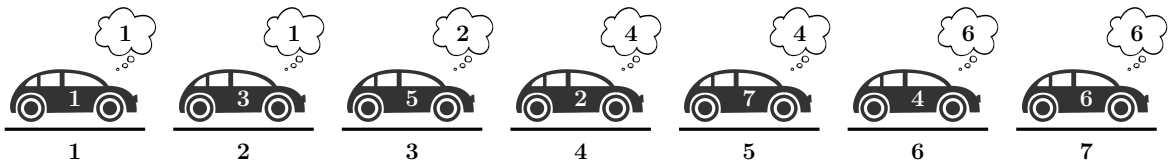

    \centering
    \resizebox{\textwidth}{!}{
    \begin{tikzpicture}
        \node at(0,0){\includegraphics[width=1in]{car.png}};
        \node at(3,0){\includegraphics[width=1in]{car.png}};
        \node at(6,0){\includegraphics[width=1in]{car.png}};
        \node at(9,0){\includegraphics[width=1in]{car.png}};
        \node at(12,0){\includegraphics[width=1in]{car.png}};
        \node at(15,0){\includegraphics[width=1in]{car.png}};
        \node at(18,0){\includegraphics[width=1in]{car.png}};
        \node at(0,-.05){\textcolor{white}{\textbf{1}}};
        \node at(3,-.05){\textcolor{white}{\textbf{3}}};
        \node at(6,-.05){\textcolor{white}{\textbf{5}}};
        \node at(9,-.05){\textcolor{white}{\textbf{2}}};
        \node at(12,-.05){\textcolor{white}{\textbf{7}}};
        \node at(15,-.05){\textcolor{white}{\textbf{4}}};
        \node at(18,-.05){\textcolor{white}{\textbf{6}}};
        \node at(.9,1.){\includegraphics[width=.75in]{callout.png}};
        \node at(3.9,1.){\includegraphics[width=.75in]{callout.png}};
        \node at(6.9,1.){\includegraphics[width=.75in]{callout.png}};
        \node at(9.9,1.){\includegraphics[width=.75in]{callout.png}};
        \node at(12.9,1.){\includegraphics[width=.75in]{callout.png}};
        \node at(15.9,1.){\includegraphics[width=.75in]{callout.png}};
        \node at(18.9,1.){\includegraphics[width=.75in]{callout.png}};
        \node at(.9,1.1){\textbf{1}};
        \node at(3.9,1.1){\textbf{1}};
        \node at(6.9,1.1){\textbf{2}};
        \node at(9.9,1.1){\textbf{4}};
        \node at(12.9,1.1){\textbf{4}};
        \node at(15.9,1.1){\textbf{6}};
        \node at(18.9,1.1){\textbf{6}};
        \draw[ultra thick](-1.25,-.6)--(1.25,-.6);
        \draw[ultra thick](1.75,-.6)--(4.25,-.6);
        \draw[ultra thick](4.75,-.6)--(7.25,-.6);
        \draw[ultra thick](7.75,-.6)--(10.25,-.6);
        \draw[ultra thick](10.75,-.6)--(13.25,-.6);
        \draw[ultra thick](13.75,-.6)--(16.25,-.6);
        \draw[ultra thick](16.75,-.6)--(19.25,-.6);
        \node at(0,-1){\textbf{1}};
        \node at(3,-1){\textbf{2}};
        \node at(6,-1){\textbf{3}};
        \node at(9,-1){\textbf{4}};
        \node at(12,-1){\textbf{5}};
        \node at(15,-1){\textbf{6}};
        \node at(18,-1){\textbf{7}};
    \end{tikzpicture}
    }
    \vspace{-.3in}
    \caption{The parking outcome for the unit interval parking function $(1,4,1,6,2,6,4)$.}
    \label{fig:apart in queue close to park}
\end{figure}

\begin{theorem}\label{thm:equinumerous}
For all $n\geq 1$, we have $|\mathscr{O}^{\UPF}_n|=|\UPFTypeOne_{n, 2 \Z^+}|$.
\end{theorem}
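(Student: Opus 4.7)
The plan is to transport both sides of the equality to the world of ordered set partitions and invoke the bijection $\lambda_n$ of \Cref{thm:cic1}. The vehicle will be the composed bijection $\Phi: \UPF_n \to \B_n$ obtained by first applying the map $\psi$ from \eqref{psi} to send $\alpha \in \UPF_n$ to its Fubini ranking $\psi(\alpha)$, and then using the standard correspondence between Fubini rankings and ordered set partitions from \Cref{sec:Preliminar}: if $\psi(\alpha)=(b_1,\dots,b_n)$ has $k$ distinct ranks, then $\Phi(\alpha)=(B_1,\dots,B_k)$, where $B_j$ consists of every index $i$ such that $b_i$ is the $j$-th smallest rank in $\psi(\alpha)$.

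The first step is to check that $\Phi$ restricts to a bijection $\UPFTypeOne_{n,2\Z^+} \to \mathscr{L}_n$. By \Cref{Luckyblocks}, the number of lucky cars of $\alpha$ equals the number of blocks $k$ in its block structure $\alpha'=\pi_1|\cdots|\pi_k$. Because $\psi$ assigns the same rank to all indices whose preferences lie in the same block $\pi_j$, and distinct ranks to indices from distinct blocks (see \cite[Lemma~2.11]{UPF}), the number of parts of $\Phi(\alpha)$ equals $k$, i.e., the number of lucky cars. Hence the parity condition transfers directly.

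The more delicate second step is to show that $\Phi$ also restricts to a bijection $\mathscr{O}^{\UPF}_n \to \mathscr{O}_n$. The key identification I plan to establish is that for each $j$,
\[
B_j = \{\,i \in [n] : a_i \in \pi_j\,\}.
\]
This follows from the definition of $\psi$ (which gives $b_i = \min \pi_j$ whenever $a_i \in \pi_j$) together with the fact that the blocks $\pi_1,\dots,\pi_k$ are, by construction of the block structure, listed in increasing order of their minimal entries. Granting this identification, the condition in \Cref{def:UPF no cyclical adj} that $a_j$ and $a_{j+1}$ (respectively $a_1$ and $a_n$) never share a block of $\alpha'$ translates verbatim to the condition in \Cref{def:no cyclical adjacencies} that $j$ and $j+1$ (respectively $1$ and $n$) never share a part of $\Phi(\alpha)$. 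Verifying this set-level equality, especially for the cyclic pair $(1,n)$, is the main bookkeeping obstacle, since care is needed to distinguish the indexing of cars from the values of their preferences.

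Assembling these two restrictions with \Cref{thm:cic1} will then give
\[
|\UPFTypeOne_{n,2\Z^+}| \;=\; |\mathscr{L}_n| \;=\; |\mathscr{O}_n| \;=\; |\mathscr{O}^{\UPF}_n|,
\]
as desired. In fact, composing $\Phi^{-1}\circ\lambda_n\circ\Phi$ produces an explicit bijection between $\UPFTypeOne_{n,2\Z^+}$ and $\mathscr{O}^{\UPF}_n$, but only the cardinality equality is required by the statement.
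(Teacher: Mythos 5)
Your proposal is correct and follows essentially the same route as the paper: transport both families to ordered set partitions via the $\UPF_n\leftrightarrow\B_n$ correspondence, check that $\UPFTypeOne_{n,2\Z^+}$ lands on $\mathscr{L}_n$ and $\mathscr{O}^{\UPF}_n$ lands on $\mathscr{O}_n$, and conclude with \Cref{thm:cic1}. The only difference is that the paper asserts these two restrictions "follow directly," whereas you spell out the identification $B_j=\{i: a_i\in\pi_j\}$ that makes the transfer of the no-cyclical-adjacency condition explicit.
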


\begin{proof}
By the bijection between ordered set partitions of the set $[n]$ and unit interval parking functions of length $n$, it follows directly that $|\mathscr{O}_n|=|\mathscr{O}^{\UPF}_n|$ and $|\mathscr{L}_n|= |\UPFTypeOne_{n, 2 \Z^+}|$. Thus, by \Cref{thm:cic1}, we conclude the desired result. 
\end{proof}
\Cref{thm:equinumerous}, together with Part (1) of \Cref{lemma:egf examples} and \Cref{cor:S all the evens}, readily imply the following.
\begin{corollary}
The exponential generating function associated with the counting sequence $\{|\mathscr{O}^{\UPF}_n|\}_{n \geq 1}$ is given by 
$$\sum_{n\geq 1}|\mathscr{O}^{\UPF}_n| \frac{x^n}{n!}=\frac{(e^x -1)^2}{1-(e^x-1)^2}.$$
Moreover, for all $n \geq 1$, we have  $$|\mathscr{O}^{\UPF}_n|=\sum_{k=1}^{\floor{n/2}} \sum_{i=0}^{2k} \binom{2k}{i} (-1)^i i^n.$$
\end{corollary}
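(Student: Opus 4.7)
The plan is to assemble this corollary directly from three ingredients that have already been established in the excerpt: \Cref{thm:equinumerous}, Part (1) of \Cref{lemma:egf examples}, and \Cref{cor:S all the evens}. Since each asserts an equality with the corresponding quantity for $\UPFTypeOne_{n,2\Z^+}$, the main task is just to transport those identities across the equality $|\mathscr{O}^{\UPF}_n|=|\UPFTypeOne_{n,2\Z^+}|$ and, for the exponential generating function, to handle the shift in the index of summation.

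For the closed formula, I would simply invoke \Cref{thm:equinumerous} to write $|\mathscr{O}^{\UPF}_n|=|\UPFTypeOne_{n,2\Z^+}|$ for every $n\geq 1$, and then apply \Cref{cor:S all the evens} to replace the right-hand side by $\sum_{k=1}^{\floor{n/2}}\sum_{i=0}^{2k}\binom{2k}{i}(-1)^i i^n$. This step is purely a substitution and requires no additional work.

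For the exponential generating function, I would start from Part (1) of \Cref{lemma:egf examples}, which states that
\[
\sum_{n\geq 0}|\UPFTypeOne_{n,2\Z^+}|\,\frac{x^n}{n!}=\frac{1}{1-(e^x-1)^2}.
\]
Using \Cref{thm:equinumerous} term by term for $n\geq 1$, and recalling the convention $|\UPFTypeOne_{0,2\Z^+}|=1$ (since the empty unit interval parking function contributes $1$ at $n=0$ in \Cref{lemma:egf examples}, while the sum in the corollary begins at $n=1$), I would subtract the $n=0$ contribution to obtain
\[
\sum_{n\geq 1}|\mathscr{O}^{\UPF}_n|\,\frac{x^n}{n!}=\frac{1}{1-(e^x-1)^2}-1=\frac{(e^x-1)^2}{1-(e^x-1)^2}.
\]

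There is no real obstacle here: every nontrivial combinatorial content (the bijection $\lambda_n$ and the lucky-cars-to-blocks correspondence) was already handled in the preceding results. The only mild subtlety worth stating explicitly is the constant-term adjustment, which is why the numerator of the final EGF is $(e^x-1)^2$ rather than $1$. I would therefore expect the write-up to be only a few lines long, amounting essentially to a citation of \Cref{thm:equinumerous}, \Cref{lemma:egf examples}(1), and \Cref{cor:S all the evens} together with a single algebraic simplification.
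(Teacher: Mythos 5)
Your proposal is correct and matches the paper exactly: the authors likewise derive this corollary by combining \Cref{thm:equinumerous}, Part (1) of \Cref{lemma:egf examples}, and \Cref{cor:S all the evens}, with the only computation being the subtraction of the $n=0$ term to pass from $\frac{1}{1-(e^x-1)^2}$ to $\frac{(e^x-1)^2}{1-(e^x-1)^2}$. Your explicit note about the constant-term adjustment is the one detail the paper leaves implicit.
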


\subsection{Exceedances of permutations and unit interval parking functions}
Throughout we let $\mathfrak{S}_n$ denote the set of all permutations of $[n]$. We begin with the definition of an exceedance of a permutation. 
\begin{definition}\label{def:exceedance}
Given a permutation $\sigma\in\mathfrak{S}_n$, an \textit{exceedance} in $\sigma$ is an index  $j\in [n]$ such that $\sigma(j)>j$.
\end{definition}

For example, the exceedances in $\sigma=24318576$ are at indices $1, 2$, and $5$.

\begin{remark}
It is possible to  show that the number of exceedances among all permutations of $[n]$ is $(n-1)\frac{n!}{2}$, see comments in the sequence \seqnum{A001286} in the OEIS. 
\end{remark}

Since $\UPFTypeOne_{n,\{n-1\}}$ is in bijection with the set of all ordered set partitions of $[n]$ with $n-1$ blocks, we can define the function $\gamma_n:\UPFTypeOne_{n,\{n-1\}} \rightarrow \mathrm{Exc}_n$, where $\mathrm{Exc}_n$ is the set of exccedances in all permutations of $[n]$ as follows:
Let $\bm{B}=(B_1,\dots,B_i,\dots,B_{n-1})=(\{a_1\},\dots,\{a_i,a_{i+1}\},\dots, \{a_n\})$ be an ordered set partition of $[n]$ with $n-1$ blocks, where $a_i<a_{i+1}$. Define $\gamma_n(\bm{B})=d_1\cdots d_n$, where
\[
d_j=
\begin{cases}
a_j, & \text{ if } j<a_i;\\
a_{i+1}, & \text{ if } j=a_i;\\
a_{j-1}, & \text{ if } j>a_i.
\end{cases}
\]

\begin{remark}
The function $\gamma_n$ is well defined for all $n \geq 2$.  It is possible to see $\gamma(\bm{B})$ as the permutation $a_1 a_2 \cdots a_{a_i-1} a_{i+1} a_{a_i} a_{a_i+1} \cdots a_n$. It is important to notice that we are not only assigning the permutation $\gamma_n(\bm{B})$ a $\bm{B}$ but also the exceedance corresponding to the position $a_i$ of $\gamma_n(\bm{B})$. 
\end{remark}

\begin{example}
{From definition of  $\gamma_8$ we have:
\begin{align*}
\gamma_8(\{ 2\},\{ 5\},\{3,7 \},\{1 \},\{ 6\},\{ 8\},\{9 \})&=(2,5,\textbf{7},3,1,6,8,4) \ \text{and}\\
\gamma_8(\{ 5\},\{ 7\},\{3 \},\{1,2 \},\{ 6\},\{ 8\},\{9 \})&=(\textbf{2},5,7,3,1,6,8,4).
\end{align*}
Where we use the bold number to emphasize the exceedance that is in the image of the ordered partition.}
\end{example}

\begin{theorem}\label{thm:exceedances}
For $n\geq 2$, 
the function $\gamma_n:\UPFTypeOne_{n,\{n-1\}} \rightarrow \mathrm{Exc}_n$ is a bijection. In particular, $|\UPFTypeOne_{n,\{n-1\}}|=(n-1)\frac{n!}{2}$, the number of exceedances of the permutations of $[n]$.
\end{theorem}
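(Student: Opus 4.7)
The plan is to establish that $\gamma_n$ is a bijection by constructing an explicit inverse $\delta_n : \mathrm{Exc}_n \to \UPFTypeOne_{n,\{n-1\}}$. The cardinality statement will then follow from the observation in the remark preceding the theorem that $|\mathrm{Exc}_n| = (n-1) \cdot n!/2$.

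First I would verify well-definedness of $\gamma_n$. By \Cref{Luckyblocks} and the correspondence between unit interval parking functions and ordered set partitions, any $\mathbf{B} \in \UPFTypeOne_{n,\{n-1\}}$ corresponds to an ordered set partition of $[n]$ with exactly one doubleton $B_i = \{a_i, a_{i+1}\}$ (with $a_i < a_{i+1}$) and $n-2$ singletons. Reading the piecewise formula shows that $\gamma_n(\mathbf{B}) = d_1 \cdots d_n$ uses each of $a_1, \ldots, a_n$ exactly once: the value $a_{i+1}$ occupies position $a_i$, while the remaining values $a_1, \ldots, a_i, a_{i+2}, \ldots, a_n$ fill the other positions in order (with a single index shift past the slot at $a_i$). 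Hence the output is a permutation of $[n]$, and $d_{a_i} = a_{i+1} > a_i$ certifies that $a_i$ is an exceedance position.

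For the inverse, given $(\sigma, j) \in \mathrm{Exc}_n$, I would delete the entry $\sigma(j)$ at position $j$ of $\sigma$ to produce a length-$(n-1)$ sequence $\tau$; the value $j$ remains in $\tau$ at some position $p$ because $\sigma(j) \ne j$. Re-insert $\sigma(j)$ immediately after position $p$ of $\tau$ to obtain $\mu \in \mathfrak{S}_n$, and declare $\delta_n(\sigma, j)$ to be the ordered set partition
\[
(\{\mu_1\}, \ldots, \{\mu_{p-1}\}, \{\mu_p, \mu_{p+1}\}, \{\mu_{p+2}\}, \ldots, \{\mu_n\}),
\]
which has $n-1$ blocks and whose doubleton $\{\mu_p, \mu_{p+1}\} = \{j, \sigma(j)\}$ is correctly ordered since $j < \sigma(j)$.

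Verifying $\delta_n \circ \gamma_n = \mathrm{id}$ reduces to showing that, for $\mathbf{B}$ with reading sequence $(a_1, \ldots, a_n)$ and doubleton at position $i$, removing $\sigma(a_i) = a_{i+1}$ from $\sigma = \gamma_n(\mathbf{B})$ returns the reduced sequence $(a_1, \ldots, a_i, a_{i+2}, \ldots, a_n)$ with the value $a_i$ sitting at position $i$, so re-inserting $a_{i+1}$ restores both the original reading sequence and the doubleton's position; the reverse identity $\gamma_n \circ \delta_n = \mathrm{id}$ follows by symmetric bookkeeping. The main obstacle will be the case analysis on the location of $\sigma^{-1}(a_i)$: either $\sigma^{-1}(a_i) > a_i$ (yielding $i = \sigma^{-1}(a_i) - 1 \ge a_i$) or $\sigma^{-1}(a_i) = a_i - 1$ (forcing $i = a_i - 1$); in both subcases the index shift induced by deletion exactly cancels the shift built into the piecewise definition of $\gamma_n$, giving $p = i$. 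The bijection then yields $|\UPFTypeOne_{n,\{n-1\}}| = |\mathrm{Exc}_n| = (n-1) \cdot n!/2$, as claimed.
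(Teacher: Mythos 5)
Your proposal is correct and follows the same essential idea as the paper's proof: exhibit an explicit preimage for each marked exceedance. The packaging differs slightly. The paper argues injectivity and surjectivity separately, and its surjectivity step defines the preimage by a two-case piecewise formula depending on whether $\sigma^{-1}(j)<j$ or $\sigma^{-1}(j)>j$; your single ``delete $\sigma(j)$ from position $j$, locate the value $j$ at position $p$ in the shortened word, reinsert $\sigma(j)$ immediately after it'' description is exactly that construction, but it absorbs the case split into the definition of $p$ and simultaneously replaces the paper's somewhat informal injectivity argument with the verification $\delta_n\circ\gamma_n=\mathrm{id}$. This is a genuine, if modest, gain in cleanliness: checking both compositions on the reading word makes the index bookkeeping transparent, and your reading of $\gamma_n$ as ``remove $a_{i+1}$ from the reading word and reinsert it at position $a_i$'' is the correct interpretation (it matches the paper's examples, whereas the literal piecewise formula in the paper is slightly garbled for positions well beyond $a_i$). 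One small slip: in your sketch of the case analysis, the second alternative should be $\sigma^{-1}(a_i)<a_i$, which forces $i=\sigma^{-1}(a_i)$ (any value less than $a_i$), not $\sigma^{-1}(a_i)=a_i-1$; since $\sigma(a_i)=a_{i+1}\neq a_i$ the dichotomy is simply whether the value $a_i$ sits before or after position $a_i$, and your deletion/reinsertion mechanism already handles both cases uniformly, so this does not affect the argument.
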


\begin{proof}
     \textit{Injectivity:} Let $\bm{B}=(B_1,\dots,B_i,\dots,B_{n-1})=(\{a_1\},\dots,\{a_i,a_{i+1}\},\dots, \{a_n\})$ and $\mathbf{B'}=(B'_1,\dots,B'_i,\dots,B'_{n-1})=(\{a'_1\},\dots,\{a'_j,a'_{j+1}\},\dots, \{a'_n\})$ two partitions of $[n]$ such that $\gamma_n(\bm{B})=\gamma_n(\mathbf{B'})$. Then, 
        \[a_1 a_2 \cdots a_{a_i-1} a_{i+1} a_{a_i} a_{a_i+1} \cdots a_n = a'_1 a'_2 \cdots a'_{a'_j-1} a'_{j+1} a'_{a'_j} a'_{a'_j+1} \cdots a'_n
    \]
        Note that, since $\gamma_n(\bm{B})=\gamma_n(\mathbf{B'})$, we are looking at the same exceedance (i.e the same entry of the permutation). Hence, $a_i=a'_j$ and $a_{i+1}=a'_{j+1}$, this means both partitions have the same blocks, but may be not in the same order. However, by the definition of $\gamma_n$, since $a_i=a'_j$ and $a_1 a_2 \cdots a_{a_i-1} a_{i+1} a_{a_i} a_{a_i+1} \cdots a_n=a'_1 a'_2 \cdots a'_{a'_j-1} a'_{j+1} a'_{a'_j} a'_{a'_j+1} \cdots a'_n$, $i=j$ otherwise, the value $a_i$ would appear in two distinct indices of the permutation, which is a contradiction, and this implies that $\bm{B}=\bm{B'}$.

    \textit{Surjectivity: } Let $\sigma=\sigma_1\cdots\sigma_n$ be a permutation of $[n]$ with an exceedance in $j\in[n-1]$, that is $\sigma_j>j$. Define a partition of $[n]$, $\bm{B}=(B_1,\dots,B_{n-1})$ as follows:
        \begin{itemize}
    \item[$\bullet$] if $\sigma^{-1}_j<j$, then
    \[
    B_i:=
    \begin{cases}
    \{\sigma_i\}, & \textit{ if } i<j \textit{ and } i\neq \sigma^{-1}_j;\\
    \{\sigma_{i+1}\}, & \textit{ if } i\geq j;\\
    \{j,\sigma_j\}, & \textit{ if } i=\sigma^{-1}_j.
    \end{cases}
    \]
    \item[$\bullet$] if $\sigma^{-1}_j>j$ then
    \[
    B_i:=
    \begin{cases}
    \{\sigma_i\}, & \textit{ if } i<j;\\
    \{\sigma_{i+1}\}, & \textit{ if } i\geq j \textit{ and } i\neq \sigma^{-1}_j-1;\\
    \{j,\sigma_j\}, & \textit{ if } i=\sigma^{-1}_j-1.
    \end{cases}
    \]

    In both cases, by  the definition of $\gamma$, we have $\gamma_n(\bm{B})=\sigma$.
\end{itemize}
Hence, $\gamma_n$ is a bijection, and thus 
\[
|\UPFTypeOne_{n,\{n-1\}}|= (n-1) \frac{n!}{2},
\]
as desired.
\end{proof}

\section{Restricted Fubini rankings and Restricted Unit Interval Parking Functions of Type 2}\label{sec:restricted type 2}

\begin{definition}\label{def:restricted FR type 2}
We define $\FRTypeTwo_{n,S}$ as the set of Fubini rankings with $n$ competitors if, for each entry $x$ in $\alpha\in \FRTypeTwo_{n,S}$, the number of occurrences of $x$ in $\alpha$ belongs to $S$. 
The set $\FRTypeTwo_{n,S}$ is called the set of \emph{$S$-restricted Fubini rankings of type 2}.
\end{definition}

Intuitively, an element of  $\FRTypeTwo_{n,S}$ is a ranking where a tie among $k$ participants is allowed only if $k$ is an element of $S$. 
We give some initial results for restricted Fubini rankings of type 2.

\begin{lemma}
Let $n\geq 0$.
\begin{enumerate}
\item If $S=\{1\}$, then $\FRTypeTwo_{n,S}=\mathfrak{S}_n$, the set of pemrutations of $[n]$.

\item If $S=\{n\}$, then $\FRTypeTwo_{n,S}=\{(1,1,\ldots,1)\in[n]^n\}$.

\item If $S = \mathbb{Z}^+$, then $\FRTypeTwo_{n,S}=\FR_n$, and the e.g.f.\  of the counting sequence $\{ |\FRTypeTwo_{n,S} |\}_{n \geq 0} $  is given by $\frac{1}{2-e^x}$.

\item If $S=\{2k \ | \ k \in \mathbb{Z}^+ \}$, then  the e.g.f.\ of the counting sequence $\{ |\FRTypeTwo_{n,S}| \}_{n \geq 0} $ is $\frac{1}{2-\cosh{x}}$.

\item If $S=\{2k+1 \ | \ k \in \mathbb{Z}^+ \}$, then the e.g.f.\ of the counting sequence $\{ \FRTypeTwo_{n,S} \}_{n \geq 0} $ is $\frac{1}{1-\sinh{x}}$.
\end{enumerate}
\end{lemma}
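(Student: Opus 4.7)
The plan is to handle the five parts separately: parts (1)--(3) follow almost directly from the definitions together with the standard bijection between Fubini rankings and ordered set partitions recalled in \Cref{sec:Preliminar}, while parts (4) and (5) will be derived as specializations of a single exponential generating function identity.

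For (1), I will argue that when $S=\{1\}$ every rank that occurs must occur exactly once. Condition (1) of \Cref{FubRan} forces the rank $1$ to appear, and iterating condition (2) with multiplicity one forces each subsequent rank to be exactly one larger than the previous. An induction then shows the ranks used are precisely $1,2,\ldots,n$, each once, so $\FRTypeTwo_{n,\{1\}}=\mathfrak{S}_n$. For (2), any element of $\FRTypeTwo_{n,\{n\}}$ must use a single rank since there are only $n$ positions available, and condition (1) forces that rank to be $1$. For (3), $S=\mathbb{Z}^+$ imposes no multiplicity restriction, so $\FRTypeTwo_{n,\mathbb{Z}^+}=\FR_n$, whose e.g.f.\ is the standard Fubini series $\frac{1}{2-e^x}$ recalled earlier.

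For (4) and (5), I will pass to ordered set partitions via the bijection between $\FR_n$ and $\B_n$. Under this correspondence the multiset of rank multiplicities of a Fubini ranking equals the multiset of block sizes of the associated ordered set partition, so $\FRTypeTwo_{n,S}$ corresponds to the set of ordered set partitions of $[n]$ all of whose blocks have size in $S$. Since such an ordered set partition is nothing more than a finite sequence of nonempty blocks of permitted sizes, the exponential formula for sequences of labelled structures gives
\[
\sum_{n\geq 0}|\FRTypeTwo_{n,S}|\frac{x^n}{n!}=\sum_{k\geq 0}\left(\sum_{j\in S}\frac{x^j}{j!}\right)^{k}=\frac{1}{1-\sum_{j\in S}\frac{x^j}{j!}}.
\]
Specializing, the inner sum equals $\cosh(x)-1$ when $S=2\mathbb{Z}^+$, producing $\frac{1}{2-\cosh x}$, and equals $\sinh(x)$ when $S$ consists of the odd positive integers, producing $\frac{1}{1-\sinh x}$.

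The only step that really requires care is justifying the sequence-EGF identity above, which I will do by grading by the number of blocks $k$ and invoking the product rule for exponential generating functions on $k$-tuples of disjoint nonempty labelled subsets, then summing a geometric series. No real obstacle is anticipated; the single item worth flagging is the indexing convention in part (5), since the stated $\sinh x$ series requires $S$ to include $1$, and so I will read $S=\{2k+1:k\geq 0\}$ to be the full set of odd positive integers.
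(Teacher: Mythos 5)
Your proof is correct and follows essentially the same route as the paper: the paper likewise treats (1)--(3) directly from the definitions and handles (4)--(5) by identifying $\FRTypeTwo_{n,S}$ with ordered set partitions whose block sizes lie in $S$, merely asserting the resulting e.g.f.'s where you supply the sequence-construction derivation $\bigl(1-\sum_{j\in S}x^j/j!\bigr)^{-1}$ explicitly. Your flag about the indexing in part (5) is well taken -- with the paper's convention $\mathbb{Z}^+=\{1,2,\ldots\}$ the set $\{2k+1\mid k\in\mathbb{Z}^+\}$ would omit $1$ and give $\frac{1}{1-\sinh x + x}$ instead, so the intended reading is indeed $k\geq 0$.
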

\begin{proof}
We consider each case below.
\begin{enumerate}
\item If $S=\{1\}$, then $\FRTypeTwo_{n,S}$ consists of  all  permutations of $n$, as only one participant is allowed per position.

\item If $S=\{n\}$, then the only element of $\FRTypeTwo_{n,S}$ is the tuple of $n$ ones, since all $n$ participants must occupy the same position.

\item If $S = \mathbb{Z}^+$, then $\FRTypeTwo_{n,S}$ includes all elements of $\FR_n$, as ties of any size are permitted. In this case, $\FRTypeTwo_{n,S}$ is the set of all ordered set partitions of $[n]$, and the e.g.f.\  of the counting sequence $\{ |\FRTypeTwo_{n,S} |\}_{n \geq 0} $  is given by $\frac{1}{2-e^x}$.

\item If $S=\{2k \ | \ k \in \mathbb{Z}^+ \}$, then $\FRTypeTwo_{n,S}$ is the set of all ordered set partitions of $[n]$ only in blocks with an even number of elements, and the e.g.f.\  of the counting sequence $\{ |\FRTypeTwo_{n,S}| \}_{n \geq 0} $ is $\frac{1}{2-\cosh{x}}$.

\item If $S=\{2k+1 \ | \ k \in \mathbb{Z}^+ \}$, then $\FRTypeTwo_{n,S}$ is the set of all ordered set partitions of $[n]$ only in blocks with an odd number of elements, and the e.g.f.\  of the counting sequence $\{ \FRTypeTwo_{n,S} \}_{n \geq 0} $ is $\frac{1}{1-\sinh{x}}$.\qedhere
\end{enumerate}
\end{proof}
Next we give a formula for the number of $S$-restricted Fubini rankings of type 2 when $S$ is a singleton set.
\begin{theorem}\label{thm:FR type 2 k divides n}
If $S=\{k\}$ for some $k$ that divides $n$, then $|\FRTypeTwo_{n,S}|=\frac{n!}{(k!)^{n/k}}$.
\end{theorem}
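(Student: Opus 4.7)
The plan is to use the bijection between Fubini rankings and ordered set partitions recalled in the preliminaries, which translates the counting problem into a problem about ordered set partitions with prescribed block sizes.

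First, I would observe that if every distinct rank in a Fubini ranking $\alpha \in \FRTypeTwo_{n,\{k\}}$ appears exactly $k$ times, and the total length is $n$, then the number of distinct ranks in $\alpha$ must equal $n/k$; in particular the divisibility hypothesis $k \mid n$ is necessary for $\FRTypeTwo_{n,\{k\}}$ to be nonempty. Under the bijection sending a Fubini ranking with $m$ distinct ranks to an ordered set partition of $[n]$ into $m$ blocks (where the number of occurrences of a rank equals the size of the corresponding block), an element of $\FRTypeTwo_{n,\{k\}}$ corresponds precisely to an ordered set partition of $[n]$ into $n/k$ blocks, each of size exactly $k$.

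Next I would count such ordered set partitions directly. Writing $m = n/k$, one chooses the $k$ elements that form the first block in $\binom{n}{k}$ ways, then the $k$ elements for the second block in $\binom{n-k}{k}$ ways, and so on, until the last $k$ elements fill block $m$. The product telescopes into the multinomial coefficient
\begin{equation*}
\binom{n}{\underbrace{k,k,\ldots,k}_{m}} = \frac{n!}{(k!)^{m}} = \frac{n!}{(k!)^{n/k}},
\end{equation*}
which is the claimed formula. Since the blocks of an ordered set partition are distinguishable by their position, there is no need to divide by $m!$.

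I expect no significant obstacle here: the main content is recognizing that the defining condition of $\FRTypeTwo_{n,\{k\}}$ translates exactly into a block-size condition on the associated ordered set partition, after which the count is a standard multinomial. The only subtle point to flag explicitly is that $k \mid n$ is not merely a convenient hypothesis but is forced by the requirement that every rank appear exactly $k$ times in a sequence of length $n$.
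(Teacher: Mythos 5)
Your proposal is correct and matches the paper's argument in substance: both count ordered set partitions of $[n]$ into $n/k$ blocks of size exactly $k$ and arrive at the multinomial coefficient $n!/(k!)^{n/k}$, with your sequential choice of blocks being just a repackaging of the paper's ``permute then divide out the $(k!)^{n/k}$ internal rearrangements'' argument. Your explicit remarks that the rank-multiplicity condition forces exactly $n/k$ distinct ranks and that $k \mid n$ is necessary for $\FRTypeTwo_{n,\{k\}}$ to be nonempty are welcome additions the paper leaves implicit.
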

\begin{proof}
We can arrange $n$ elements in $n!$ ways. Since each block must have exactly $k$ elements, we can split the permutation into consecutive blocks of size $k$. There are $k!$ ways to arrange elements within each block of size $k$, since there are $n/k$ such blocks, the total number of ways to arrange the elements of a permutation without changing content of the blocks is $(k!)^{n/k}$. By dividing by this number we eliminate the overcounting that arises from permuting the elements within each block, and thus the number of $\FRTypeTwo_{n,S}$ is $\frac{n!}{(k!)^{n/k}}$. 
\end{proof}

We now give a  formula for the number of $S$-restricted Fubini rankings of type 2 for any finite set $S$.
\begin{theorem}\label{thm:FR type 2}
Let  $S=\{s_1,\dots,s_j\}$ be a finite subset of positive integers integers, and let $C=\{\bm{c}=(c_1, \ldots, c_j) : \sum_{i=1}^j s_i c_i = n\}$. Then
$$\FRTypeTwo_{n,S}=n! \sum_{\bm{c} \in C} \binom{\sum_{k=1}^j c_{k}}{c_1,\dots,c_j} \prod_{i=1}^j (s_i)!^{-c_i}.$$
\end{theorem}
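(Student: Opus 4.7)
My plan is to reduce the problem to counting ordered set partitions with restricted block sizes, using the well-known bijection between $\FR_n$ and $\B_n$ that was recalled in \Cref{sec:Preliminar}. Under that bijection, a rank value $x$ appearing $k$ times in $\alpha \in \FR_n$ corresponds to a block of size $k$ in the associated ordered set partition. Therefore $\FRTypeTwo_{n,S}$ is in bijection with the set of ordered set partitions of $[n]$ in which every block has size belonging to $S$. The theorem can thus be restated as a counting problem for these restricted ordered set partitions, and the remainder of the argument is a standard type-stratification.

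Next I would stratify by \emph{type}. For each ordered set partition with blocks of sizes in $S=\{s_1,\dots,s_j\}$, let $c_i$ denote the number of blocks of size $s_i$ (for $i=1,\dots,j$). Since the blocks form a partition of $[n]$, the vector $\bm{c}=(c_1,\dots,c_j)$ must satisfy $\sum_{i=1}^{j} s_i c_i = n$, i.e.\ $\bm{c}\in C$. Conversely, every $\bm{c}\in C$ is a valid type.

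Then I would count the ordered set partitions of type $\bm{c}$ in two steps. First, the total number of blocks is $m=c_1+\cdots+c_j$, and the sequence of block sizes is an arrangement of the multiset $\{s_1^{c_1},\dots,s_j^{c_j}\}$, of which there are $\binom{m}{c_1,\dots,c_j}$. Second, for any fixed ordered tuple of block sizes $(k_1,k_2,\dots,k_m)$, the number of ordered set partitions of $[n]$ with exactly these block sizes is the multinomial $\binom{n}{k_1,\dots,k_m} = \frac{n!}{k_1!\,k_2!\cdots k_m!}$, which depends only on the multiset of sizes and equals $\frac{n!}{\prod_{i=1}^{j}(s_i!)^{c_i}}$. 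Multiplying these two counts yields $\binom{m}{c_1,\dots,c_j}\cdot\frac{n!}{\prod_{i=1}^{j}(s_i!)^{c_i}}$ ordered set partitions of type $\bm{c}$. Summing over all $\bm{c}\in C$ gives exactly the claimed formula.

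There is no significant obstacle here; the main care needed is to separate cleanly the two layers of choice (the ordering of the block sizes vs.\ the placement of $[n]$ into blocks of those sizes) so that the multinomial factors do not conflate with one another. Once that is done, the bijection with ordered set partitions handles the translation from Fubini rankings, and the formula follows immediately.
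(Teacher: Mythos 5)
Your proposal is correct and follows essentially the same route as the paper: both arguments stratify by the type vector $\bm{c}\in C$, use the multinomial $\binom{\sum_k c_k}{c_1,\dots,c_j}$ to account for the ordering of the block sizes, and divide $n!$ by $\prod_i (s_i!)^{c_i}$ for the internal arrangements within blocks. Your phrasing via ordered set partitions with a fixed size sequence is slightly cleaner than the paper's ``permutations modulo overcounting'' presentation, but the decomposition is the same.
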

\begin{proof}
We can arrange $n$ elements in $n!$ ways. Given a partition tuple \(\bm{c} \in C\), we can split the permutation into $\sum_{k=1}^j c_{k}$ consecutive blocks. To assign sizes to these blocks according to the counts in \(\bm{c}\), we simply choose how to group the \(\sum_{k=1}^j c_{k}\) blocks into categories: \(c_1\) blocks of size \(s_1\), \(c_2\) of size \(s_2\), and so on. The number of ways to do this is given by the multinomial coefficient:
\[
\binom{\sum_{k=1}^j c_{k}}{c_1, c_2, \ldots, c_j}.
\]

Next, we account for internal arrangements within each block. Since a block of size $s_i$ can be arranged in $s_i!$ ways, and there are $c_i$ such blocks, we must divide by $(s_i!)^{c_i}$ to avoid overcounting. Thus, the number of corresponding ordered set partitions is  
$$n! \binom{\sum_{k=1}^j c_{k}}{c_1,\dots,c_j} \prod_{i=1}^j (s_i)!^{-c_i}.$$
Summing over all valid partition vectors $\bm{c} \in C$ gives the total count of $\FRTypeTwo_{n,S}$.  
\end{proof}

We have an analogous definition in the case of the unit interval parking functions.

\begin{definition}\label{def:restrict UPF type 2}
We define $\UPFTypeTwo_{n,S}$ as the set of unit interval parking functions of length $n$ such that the size of each block belongs to $S$, where $S$ is a subset of positive integers. If $\alpha \in \UPFTypeTwo_{n,S}$, we say that $\alpha$ is an \emph{$S$-restricted unit interval parking functions of type 2}.
\end{definition}

Analogously to the argument given in \Cref{biyection}, but considering the size of the blocks and the number of ties in each position instead of the number of blocks and the number of different positions, we can state the following theorem.

\begin{theorem}\label{thm:FR type 2 bijection UPF type 2}
For all $n \in \Z^+$ and any set $S$ of positive integers, the set $\FRTypeTwo_{n,S}$ is in bijection with $ \UPFTypeTwo_{n,S}$.
\end{theorem}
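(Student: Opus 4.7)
The plan is to reuse verbatim the bijection $\phi:\FR_n\to\UPF_n$ and its inverse $\psi:\UPF_n\to\FR_n$ from \eqref{phi} and \eqref{psi}, exactly as in the proof of \Cref{biyection}, and simply check that these maps send the type-2 restrictions on one side to the type-2 restrictions on the other. The key fact already in the preliminaries (see the remark following the example of $\phi$ and $\psi$) is that under this bijection the \emph{blocks} of the UPF block structure correspond to the \emph{ranks} of the Fubini ranking, and moreover the size of a block equals the multiplicity of the corresponding rank. So the content of the present theorem is really: what is being restricted on the two sides is transported by the bijection.

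First I would take $\alpha\in\UPFTypeTwo_{n,S}$ with block structure $\alpha'=\pi_1|\pi_2|\cdots|\pi_k$, so that $|\pi_j|\in S$ for every $j$. By the definition of $\psi$, every index $i$ with $a_i\in\pi_j$ is assigned $b_i=\min\pi_j$; consequently the multiset of positions of $\psi(\alpha)$ equal to $\min\pi_j$ is exactly the set of indices in $\pi_j$, so the value $\min\pi_j$ occurs with multiplicity $|\pi_j|\in S$. Since distinct blocks $\pi_j$ have distinct minima (the blocks form a composition of the weakly increasing rearrangement into contiguous pieces), the multiplicity of every rank appearing in $\psi(\alpha)$ lies in $S$, hence $\psi(\alpha)\in\FRTypeTwo_{n,S}$.

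Conversely, take $\beta\in\FRTypeTwo_{n,S}$ and apply $\phi$. Using the formula for $\phi$, all indices $i$ with a common value $b_i=x$ are sent to the entries $x,x+1,\dots,x+k-1$ of $\phi(\beta)$ (where $k$ is the multiplicity of $x$ in $\beta$), and by \Cref{R:ind.form.blocks} these entries form exactly one block $\pi$ of $\phi(\beta)'$ with $|\pi|=k$. Since by hypothesis $k\in S$ for every value $x$ appearing in $\beta$, every block of $\phi(\beta)'$ has size in $S$, so $\phi(\beta)\in\UPFTypeTwo_{n,S}$. Together with the fact that $\phi$ and $\psi$ are mutually inverse on $\FR_n$ and $\UPF_n$, this gives that $\phi$ and $\psi$ restrict to mutually inverse bijections between $\FRTypeTwo_{n,S}$ and $\UPFTypeTwo_{n,S}$.

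There is really no hard step here; the only point that requires a moment of care is the observation that $\psi$ sends blocks to constant stretches of equal rank and, crucially, that \emph{different} blocks of $\alpha$ produce \emph{different} ranks in $\psi(\alpha)$ (so block sizes translate to rank multiplicities without collisions). This is immediate from the block-structure description in \Cref{def:UPF block structure} together with \Cref{R:ind.form.blocks}(2).
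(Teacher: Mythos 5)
Your proposal is correct and takes essentially the same route as the paper: the paper's own ``proof'' is just the remark that the argument of \Cref{biyection} carries over with block sizes and tie multiplicities in place of block counts and rank counts, which is precisely what you spell out via $\phi$ and $\psi$. One cosmetic slip: under $\phi$ the $k$ entries sharing rank $x$ become $x,x,x+1,\dots,x+k-2$ (cf.\ \Cref{R:ind.form.blocks}), not $x,x+1,\dots,x+k-1$, but this does not affect your conclusion that the resulting block has size $k$.
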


\section{Restricted Fubini rankings and Restricted Unit Interval Parking Functions of Type 3}\label{sec:restricted type 3}

In this section, we introduce a different way of restricting Fubini rankings and unit interval parking functions. 

We now impose restrictions on each \textit{rank} of the Fubini ranking according to the entries of a given sequence of positive integers. We make this definition precise next.

\begin{definition}\label{def: FR position vector}
Let $ \alpha \in \FR_n $  have $k$ distinct ranks. The \emph{position vector} of $\alpha$ is the vector $\mathbf{c}=(c_1,\ldots, c_k)$, where $c_i$ is the number of entries of $\alpha$ that take the value $c_0+c_1+\cdots+c_{i-1}$  for each $i=1,\dots,k$, with $c_0=1$. 
\end{definition}
We illustrate \Cref{def: FR position vector} with the following.
\begin{example}
Consider $\alpha=(4,2,1,2,4,7,4)$. It is straightforward to check that $\alpha \in \FR_7$. Then, given $c_0=1$, we have that $c_1$ the number of occurrences of $c_0=1$ in $\alpha$, this is $c_1=1$. Now, $c_2$ is the number of occurrences of $c_0+c_1=2$ in $\alpha$, then $c_2=2$. Following this procedure we find that $c_3=3$ and $c_4=1$, and then the position vector of $\alpha$ is $\bm{c}=(1,2,3,1)$.
\end{example}  
Next we define the set of $S$-restricted Fubini rankings of type 3.
\begin{definition}\label{def: restricted FR type 3}
Let $ \alpha \in \FR_n $  have position vector $\bm{c}=(c_1,\dots,c_k)$. A sequence of positive integers  $S=(s_1,s_2,\dots)$ is called a \textit{block sequence} for $\alpha$ if $c_i \leq s_i$ for each $i=1,2, \ldots,k$. We denote by $\FRTypeThree_{n,S}$ the set of Fubini rankings of length $n$ with block sequence $S$. If $\alpha \in \FRTypeThree_{n,S}$, we say that $\alpha$ is \emph{$S$-restricted Fubini rankings of type 3}.
\end{definition}

We illustrate \Cref{def: restricted FR type 3} with the following.
\begin{example}
Let $S=(2,4,6,\dots)$ be the sequence of even positive integers. Consider the Fubini ranking $\alpha= (1,3,1,6,3,3,6)$  of length 7, which has position vector $\bm{c}=(2,3,2)$. Checking the conditions of \Cref{def: restricted FR type 3}, we verify that  
\[
c_1 = 2 \leq 2 = s_1, \quad c_2 = 3 \leq 4 = s_2, \quad \text{and} \quad c_3 = 2 \leq 6 = s_3.
\]
Thus, we conclude that $\alpha \in \FRTypeThree_{7,S} $. 

On the other hand, consider the Fubini ranking $\beta = (1,1,1,4)$ of length 4, which has position vector $\bm{c} = (3,1)$.  Since
\[
c_1 = 3 > 2 = s_1,
\]  
we see that $\beta \notin \FRTypeThree_{4,S}$.
 \end{example}

We now give a formula for the number of $S$-restricted Fubini rankings of type 3.

\begin{theorem}\label{counting:type3}
For all $n \in \Z^+$ and every sequence of positive integers $S=(s_1,s_2,\ldots)$, we have 
\begin{align*}
|\FRTypeThree_{n,S} |= \sum_{k=1}^{n} \sum_{(c_1,c_2,\ldots,c_k) \in M_{S,k,n}} \binom{n}{c_1,c_2,\ldots ,c_k}, 
\end{align*}
where 
$
M_{S,k,n} = \left\{(c_1, c_2, \dots, c_k)  \mid  c_1+c_2+\cdots + c_k= n  \text { and } c_i\in[s_i] \text{ for all } 1\leq i \leq k\right\}$.
\end{theorem}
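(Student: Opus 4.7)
The plan is to partition $\FRTypeThree_{n,S}$ according to the position vector of its elements, count the rankings with each fixed position vector via the bijection with ordered set partitions, and then sum over the admissible position vectors.

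First, I would observe that if $\alpha \in \FR_n$ has position vector $(c_1,\ldots,c_k)$, then $c_1+c_2+\cdots+c_k=n$, since the entries of $\alpha$ distribute themselves completely among the $k$ distinct ranks (by definition of a Fubini ranking, the rank after a group of size $c_i$ jumps by exactly $c_i$, so the values $c_0, c_0+c_1, c_0+c_1+c_2, \ldots$ with $c_0=1$ enumerate all ranks, and every entry of $\alpha$ is one of these). Hence the position vector is a composition of $n$ into $k$ positive parts. Conversely, by the bijection between Fubini rankings and ordered set partitions recalled in \Cref{sec:Preliminar}, a Fubini ranking with position vector $(c_1,\ldots,c_k)$ corresponds to an ordered set partition $(B_1,\ldots,B_k)$ of $[n]$ with $|B_i|=c_i$: the block $B_i$ is precisely the set of indices $j$ with $\alpha_j$ equal to the $i$-th rank.

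Second, for a fixed composition $(c_1,\ldots,c_k)$ of $n$, the number of Fubini rankings in $\FR_n$ with that exact position vector equals the number of ordered set partitions $(B_1,\ldots,B_k)$ of $[n]$ with $|B_i|=c_i$, which is the multinomial coefficient $\binom{n}{c_1,\ldots,c_k}$ (choose $c_1$ indices to receive the first rank, then $c_2$ from the remainder for the second rank, and so on).

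Third, by \Cref{def: restricted FR type 3}, an element $\alpha \in \FR_n$ with position vector $(c_1,\ldots,c_k)$ lies in $\FRTypeThree_{n,S}$ if and only if $c_i \leq s_i$ for every $i=1,\ldots,k$, that is, if and only if $(c_1,\ldots,c_k) \in M_{S,k,n}$. Since the possible number of distinct ranks $k$ ranges from $1$ to $n$, summing over $k$ and over the admissible compositions yields
\begin{align*}
|\FRTypeThree_{n,S}| = \sum_{k=1}^{n} \sum_{(c_1,\ldots,c_k) \in M_{S,k,n}} \binom{n}{c_1,c_2,\ldots,c_k},
\end{align*}
as claimed.

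The only subtle point is confirming that distinct position vectors give rise to disjoint families of Fubini rankings (so no double counting occurs); this is immediate since the position vector is uniquely determined by the ranking. There is no substantive obstacle beyond correctly invoking the correspondence with ordered set partitions, which was already established in \Cref{sec:Preliminar}.
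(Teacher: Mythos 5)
Your proposal is correct and follows essentially the same route as the paper's proof: classify the rankings by their position vector, count those with a fixed position vector by the multinomial coefficient via the correspondence with ordered set partitions, and sum over the vectors in $M_{S,k,n}$ for $k=1,\dots,n$. Your write-up is in fact slightly more careful than the paper's, since it explicitly notes that the position vector is a composition of $n$ and that distinct position vectors give disjoint classes.
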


\begin{proof}
Let $\alpha \in \FRTypeThree_{n,S}$  be an $S$-restricted Fubini rankings of type 3 with position vector $\bm{c}=(c_1,c_2,\ldots,c_k)$. By definition, $\bm{c}$ must be an element of $M_{S,k,n}$, ensuring that $\alpha$ has exactly $k$ distinct ranks and satisfies the block sequence $S$.   To construct such an $\alpha$, we need to assign its $n$ entries to the values determined by $\bm{c}$. The number of ways to distribute these $n$ elements into blocks of sizes $c_1, c_2, \dots, c_k$ is given by the multinomial coefficient $\binom{n}{c_1,c_2,\ldots,c_k}$.  
Finally, summing over all possible values of $k$, we obtain the desired result.
\end{proof}

Another interesting way to derive \Cref{counting:type3} is through exponential generating functions. Let $\bm{c}=(c_1,c_2,\ldots,c_k) \in M_{S,k,n}$. Is known that the exponential generating function of a set of size $c_i$ is given by $\frac{x^{c_i}}{c_i!}$ for each $i=1,\dots,k$. Taking the product of these functions and summing over all elements of $M_{S,k,n}$ and all possible values of $k$, we obtain  the exponential generating function for the number of elements in $\FRTypeThree_{n,S}$: 
\begin{align*}
\sum_{n \geq 1 }\sum_{k=1}^n \sum_{\bm{c} \in M_{S,k,n}} \frac{x^{c_1}}{c_1!} \frac{x^{c_2}}{c_2!} \cdots \frac{x^{c_k}}{c_k!} 
&=  \sum_{n \geq 1 } \sum_{k=1}^n \sum_{\bm{c} \in M_{S,k,n}} \frac{x^n}{c_1! c_2! \cdots c_k!} \\
&= \sum_{n \geq 1 } \sum_{k=1}^n \sum_{\bm{c} \in M_{S,k,n}} \binom{n}{c_1, c_2, \ldots, c_k} \frac{x^n}{n!}.
\end{align*}
Thus, we have proved the theorem once again.

We apply \Cref{counting:type3} next.
\begin{example}
Let $n=7$ and let $S$ be the sequence of even positive integers. 
Consider the element $\bm{c}=(2,1,3,1)\in M_{S,4,7}$.
We have shown that there are $\binom{7}{2,1,3,1}$ elements in $\UPFTypeThree_{7,S}$ associated with $\bm{c}$,  each of which can be constructed by forming an ordered set partition of $[7]$ into blocks of sizes given by $\bm{c}$. 
This follows from the fact that the enumeration of Fubini rankings is given by the Fubini numbers.  For example, $(\{ 3,4\}, \{ 5\}, \{ 1, 6,7 \}, \{ 2 \})$ corresponds to the Fubini ranking $(4,7,1,1,3,4,4)$.
\end{example}
Next we define the set of $S$-restricted unit interval parking functions of type 3.

\begin{definition}\label{def:restricted UPF type 3}
Let $\beta \in \UPF_n$  have block structure $\beta' = \pi_1 | \pi_2 | \cdots | \pi_k$. A sequence $S=(s_1,s_2,\dots)$ of positive integers is called a \textit{block sequence} for $\beta$ if $|\pi_i| \leq s_i$ for each $i=1,\ldots,k$. We denote by $\UPFTypeThree_{n,S}$ the set of unit interval parking functions of length $n$ with block sequence $S$. If $\beta \in \UPF_n$, we say that $\beta$ is an \emph{$S$-restricted unit interval parking functions of type 3}.
\end{definition}
 Before
illustrating \Cref{def:restricted UPF type 3} we recall the following characterization of parking functions. Let $\alpha=(a_1,a_2,\ldots,a_n)\in[n]^n$. The tuple $\alpha$ is a parking function if and only if the nondecreasing rearrangement of its entries, denoted $\alpha^\uparrow=(a_1',a_2',\ldots,a_n')$ satisfies $a_i'\leq i$ for all $1\leq i\leq n$.

\begin{example}\label{ex1:type3}
Let $S=(4,5,6,7,8,\ldots)$ and let $\beta = (5,1,5,7,1,2,9,9,3,10,7)$. 
Observe that the nondecreasing rearrangement of $\beta$ is given by $\beta^{\uparrow}=(1,1,2,3,5,5,7,7,9,9,10)$. It is easy to see that every entry of $\beta^{\uparrow}$ satisfies the condition of being less than or equal to its position.  
This implies that $\beta \in \PF_{10}$. 
Now, the block structure of $\beta^{\uparrow}$ is $\beta'= 1123 |55 | 77 | 9 9 (10)$, meaning that $\beta$ respects the relative order given by $\beta'$. Noticing that $\beta'$ is a unit interval parking function, this is a sufficient condition for $\beta$ to belong  to $\UPF_{10}$. Finally, the block sizes in $\beta'$ form the tuple $(4,2,2,3)$, which satisfies the given restrictions.  Thus, we  conclude that $\beta \in \UPFTypeThree_{10,S}$.
\end{example}

Using the same idea as in \Cref{biyection}, one can easily verify that for all $n \in \Z^+$ and any sequence $S$ of positive integers, there exists a bijection between $\FRTypeThree_{n,S}$ and $\UPFTypeThree_{n,S}$. This correspondence follows directly by applying the injective functions \( \phi \) and \( \psi \), defined in \eqref{phi} and \eqref{psi}, respectively, to the block structure of a unit interval parking function and the list of positions in the associated Fubini ranking. We state this result next and follow that with and illustration of the bijection.

\begin{theorem}\label{thm:UPF type 3 bijection FR type 3}
For all $n \in \Z^+$ and any sequence $S$ of positive integers, the set $\FRTypeThree_{n,S}$ is in bijection with $\UPFTypeThree_{n,S}$.
\end{theorem}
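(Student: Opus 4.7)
The plan is to show that the bijection $\phi: \FR_n \to \UPF_n$ from \eqref{phi}, with inverse $\psi$ from \eqref{psi}, restricts to a bijection $\FRTypeThree_{n,S} \to \UPFTypeThree_{n,S}$. The central claim to verify is that $\phi$ converts the position vector of a Fubini ranking into the sequence of block sizes of its image, in the same order. Precisely: if $\alpha \in \FR_n$ has position vector $\mathbf{c}=(c_1,\ldots,c_k)$, then the block structure $\phi(\alpha)'= \pi_1|\cdots|\pi_k$ satisfies $|\pi_i|=c_i$ for each $i \in [k]$.

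To prove this claim, note that by the Fubini condition and the definition of position vector, the $i$-th distinct rank in $\alpha$ is $r_i := 1+c_1+\cdots+c_{i-1}$, occurring exactly $c_i$ times. By the definition of $\phi$, the first occurrence of $r_i$ in $\alpha$ maps to $r_i$, while the $j$-th occurrence (for $2 \le j \le c_i$) maps to $r_i+j-2$. Hence the multiset of values contributed to $\phi(\alpha)$ by the entries of $\alpha$ equal to $r_i$ is $\{r_i, r_i, r_i+1, \ldots, r_i+c_i-2\}$, which by \Cref{R:ind.form.blocks}(2) is precisely the $i$-th block $\pi_i$ in the block structure of $\phi(\alpha)$; in particular, $|\pi_i|=c_i$. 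Symmetrically, for $\beta \in \UPF_n$ with block structure $\pi_1|\cdots|\pi_k$, the function $\psi$ assigns to every element of $\pi_i$ the common value $\min \pi_i = r_i$, so $\psi(\beta)$ has exactly $|\pi_i|$ entries equal to $r_i$, making $(|\pi_1|, \ldots, |\pi_k|)$ the position vector of $\psi(\beta)$.

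With the claim established, a sequence $S=(s_1,s_2,\ldots)$ satisfies $c_i \le s_i$ for all $i$ if and only if $|\pi_i| \le s_i$ for all $i$, so $\alpha \in \FRTypeThree_{n,S}$ if and only if $\phi(\alpha) \in \UPFTypeThree_{n,S}$. Since $\phi$ and $\psi$ are already mutually inverse bijections between the unrestricted sets $\FR_n$ and $\UPF_n$, their restrictions give the desired bijection. The main obstacle—such as it is—lies in the bookkeeping step that matches block sizes to the position vector componentwise; once the correspondence $|\pi_i|=c_i$ is formally justified via \Cref{R:ind.form.blocks}, the rest of the argument is immediate.
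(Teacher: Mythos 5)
Your proposal is correct and follows the same route as the paper, which simply invokes the maps $\phi$ and $\psi$ and the correspondence between ranks and blocks established for \Cref{biyection}. In fact, you supply more detail than the paper does: your explicit verification that the $c_i$ entries of rank $r_i=1+c_1+\cdots+c_{i-1}$ map under $\phi$ to the multiset $\{r_i,r_i,r_i+1,\ldots,r_i+c_i-2\}$, matching the canonical block form of \Cref{R:ind.form.blocks}, is exactly the bookkeeping the paper leaves implicit.
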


\begin{example}\label{ex2:type3}
Consider the tuple $\alpha=(3,6,3,6,5,6,1,1,9)\in[9]^9$, which one can confirm is a Fubini ranking of length 9. However, this is evident since we can associate $\alpha$ with the ordered set partition of $[9]$ given by $(\{7,8\}, \{ 1,3\}, \{5\},  \{6,2,4\}, \{9\})$. 
Applying the function $\phi$ defined in \eqref{phi}, we obtain  $\phi(\alpha)=(3,6,3,6,5,7,1,1,9)$, which is a unit interval parking function of length 9.

Now, let $S=(2,2,3,3,4,4,\ldots)$. Since the number of entries at each position in
$\alpha$ satisfies the restrictions given by \( S \), we conclude that $\alpha \in \FRTypeThree_{9,S}$. On the other hand, the block structure of $\phi(\alpha)$ is given by $\phi(\alpha)'=11 | 33 | 5 |667| 9$, which satisfies the same restrictions.  Thus, $\phi(\alpha) \in \UPFTypeThree_{9,S}$, as expected.
\end{example}

The next result is simple, but it plays a key role in our final results.

\begin{proposition}\label{reag:type3}
For all $n \in \Z^+$, any rearrangement $\sigma$ of $\alpha \in  \UPFTypeThree_{n,S}$ that remains a unit interval parking function also satisfies  $\sigma \in  \UPFTypeThree_{n,S}$.
\end{proposition}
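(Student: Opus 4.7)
The plan is to observe that the $S$-restriction of type 3 depends only on the sizes of the blocks in the block structure, and that the block structure is an invariant of any rearrangement. Specifically, by \Cref{def:UPF block structure}, the block structure of a unit interval parking function is computed from its weakly increasing rearrangement. Since $\sigma$ is a rearrangement of $\alpha$, we have $\sigma^{\uparrow}=\alpha^{\uparrow}$, and therefore the block structures coincide: $\sigma'=\alpha'=\pi_1|\pi_2|\cdots|\pi_k$.

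With this in hand, the proof reduces to a one-line verification. First I would state the hypothesis $\alpha \in \UPFTypeThree_{n,S}$ and unpack it using \Cref{def:restricted UPF type 3}: $|\pi_i|\leq s_i$ for every $i=1,\ldots,k$. Then I would invoke the equality $\sigma'=\alpha'$ established above (noting that $\sigma\in\UPF_n$ by hypothesis, so its block structure is well defined in the same sense). Since $\sigma$ has the very same blocks $\pi_1,\ldots,\pi_k$, in particular the same block sizes, the inequalities $|\pi_i|\leq s_i$ remain satisfied, so $\sigma \in \UPFTypeThree_{n,S}$.

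There is no real obstacle here; the only point worth being careful about is that ``rearrangement'' means $\sigma$ and $\alpha$ have the same multiset of entries, which is exactly what ensures $\sigma^{\uparrow}=\alpha^{\uparrow}$ and hence identical block structures. \Cref{T1} is not strictly needed for the proof itself, but it is what guarantees that such rearrangements $\sigma\in\UPF_n$ actually exist and are parameterized by preserving the relative order within each block. Thus the proposition is essentially a direct consequence of \Cref{def:UPF block structure} together with the fact that the block structure is a function of the weakly increasing rearrangement only.
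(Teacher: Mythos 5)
Your proof is correct and takes essentially the same route as the paper: both arguments rest on the fact that the block structure (and hence the sequence of block sizes) is unchanged under rearrangement, so the type-3 condition $|\pi_i|\leq s_i$ transfers immediately from $\alpha$ to $\sigma$. If anything, your version is slightly more self-contained, since you justify the invariance directly from \Cref{def:UPF block structure} (the block structure is a function of $\alpha^{\uparrow}$ alone) rather than citing it as already known.
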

\begin{proof}
We already know that if a rearrangement $\sigma$ of a unit parking function $\alpha$ is still a parking function, then $\sigma$ must preserve the same block structure as $\alpha$. As  $\alpha$ is an element of $\UPFTypeThree_{n,S} $, then $\sigma$ is also an element of this set.
\end{proof}
Now, let us focus on the special case of elements in $\UPFTypeThree_{n,S}$ where the block sizes in the parking function structure precisely match the given block sequence.

\begin{definition}\label{def:fully determined}
Let $S=(s_1,s_2,\dots)$ be an arbitrary sequence of positive integers, and let $\beta \in \UPFTypeThree_{n,S}$ with block structure $\beta'= \pi_1 | \pi_2 | \cdots | \pi_k$. We say that $\beta$ is \emph{fully determined} by $S$ if $|\pi_i|=s_i$ for each $i=1,\dots,k$.
\end{definition}

We illustrate \Cref{def:fully determined} next.
\begin{example}
Consider  $\beta=(5,1,5,7,1,2,9,9,3,10,7)$ from \Cref{ex1:type3}. Then, $\beta$ is fully determined by any sequence $S$ whose initial initial entries are $(4,2,2,3)$. 
Similarly, take $\phi(\alpha)=(3,6,3,6,5,7,1,1,9)$ from \Cref{ex2:type3}. In this case, $\phi(\alpha)$ is fully determined by any sequence $S$ whose initial initial entries are $(2,2,1,3,1)$.
\end{example}

The following result follows directly from \Cref{counting:type3}. 
\begin{corollary}\label{counting2:type3}
For all $n \in \Z^+$ and any arbitrary sequence of positive integers $S=(s_1,s_2,\dots)$, there are $\binom{n}{s_1,s_2,\ldots,s_k}$ elements in $\UPFTypeThree_{n,S}$ that are fully determined by $S$, provided that there exists an integer $k \in \Z^+$  for which $s_1+s_2+\cdots+s_k=n$.
\end{corollary}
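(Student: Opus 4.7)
The plan is to reduce to the Fubini ranking side via Theorem~\ref{thm:UPF type 3 bijection FR type 3} and then isolate a single summand in the enumeration formula of Theorem~\ref{counting:type3}. The key observation I would emphasize is the translation of the ``fully determined'' condition: under the bijection $\psi:\UPFTypeThree_{n,S}\to\FRTypeThree_{n,S}$, the block structure $\beta'=\pi_1|\pi_2|\cdots|\pi_k$ of a unit interval parking function $\beta$ corresponds precisely to the position vector $\bm{c}=(c_1,\ldots,c_k)$ of the associated Fubini ranking, in such a way that $c_i=|\pi_i|$ for every $i$ (this was made explicit in the discussion following the definition of $\psi$, since the number of entries at the $i$-th rank equals the size of the $i$-th block). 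Therefore $\beta$ being fully determined by $S$ is equivalent to $\psi(\beta)$ having position vector exactly $(s_1,s_2,\ldots,s_k)$.

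Next I would invoke the existence of an integer $k$ with $s_1+s_2+\cdots+s_k=n$ to check that the specific tuple $(s_1,\ldots,s_k)$ satisfies the membership criterion for $M_{S,k,n}$: each coordinate satisfies $s_i\leq s_i$, and the coordinates sum to $n$, so $(s_1,\ldots,s_k)\in M_{S,k,n}$. In the enumeration
\[
|\FRTypeThree_{n,S}|=\sum_{k=1}^{n}\sum_{(c_1,\ldots,c_k)\in M_{S,k,n}}\binom{n}{c_1,\ldots,c_k}
\]
from Theorem~\ref{counting:type3}, the inner contribution corresponding to a fixed position vector $(c_1,\ldots,c_k)$ equals $\binom{n}{c_1,\ldots,c_k}$, counting the number of ways to assign the $n$ entries to the $k$ ranks of prescribed multiplicities. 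Specializing $c_i=s_i$ yields exactly $\binom{n}{s_1,s_2,\ldots,s_k}$ Fubini rankings whose position vector is $(s_1,\ldots,s_k)$.

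Finally I would transport the count back through the bijection: since $\psi$ and $\phi$ are mutually inverse bijections between $\UPFTypeThree_{n,S}$ and $\FRTypeThree_{n,S}$, and they match fully determined unit interval parking functions to Fubini rankings with position vector $(s_1,\ldots,s_k)$, the number of fully determined elements of $\UPFTypeThree_{n,S}$ is the same $\binom{n}{s_1,s_2,\ldots,s_k}$. No real obstacle is expected; the only point requiring care is the bookkeeping that $|\pi_i|=c_i$ under the bijection, and that under the hypothesis $s_1+\cdots+s_k=n$ the tuple $(s_1,\ldots,s_k)$ genuinely lies in $M_{S,k,n}$, so that the relevant summand is legitimately present in the formula from Theorem~\ref{counting:type3}.
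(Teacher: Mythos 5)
Your proposal is correct and follows essentially the same route as the paper, which states that the corollary ``follows directly from \Cref{counting:type3}'': you simply make explicit the identification of the fully determined elements with the single summand indexed by $(c_1,\ldots,c_k)=(s_1,\ldots,s_k)$ in that theorem's formula, transported through the bijection of \Cref{thm:UPF type 3 bijection FR type 3} under which $c_i=|\pi_i|$. The bookkeeping you flag (that $(s_1,\ldots,s_k)\in M_{S,k,n}$ under the hypothesis $s_1+\cdots+s_k=n$) is exactly the point the paper leaves implicit.
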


\Cref{T1} states that a unit interval parking function $\beta$ of length $n$ with block structure $\beta' = \pi_1 | \pi_2 | \cdots | \pi_k$ has $\binom{n}{|\pi_1|,|\pi_2|,\ldots,|\pi_k|}$ rearrangements that are also unit interval parking functions. Then, by \Cref{reag:type3}, the rearrangements of a unit interval parking function fully determined by $S$ that are also unit interval parking functions are precisely all unit interval parking functions fully determined by $S$.

\section{Future Work}\label{sec:future}

Much of our current work relies on a bijection between Fubini rankings and ordered set partitions. 
An intriguing question is how these constructions behave under $q$-analogues. For instance, understanding the enumerative and algebraic properties of $q$-analogues of Fubini rankings or $S$-restricted unit interval parking functions may connect with the theory developed by by Ishikawa, Kasraoui, and Zeng in \cite{istvawa2006statisticsorderedpartitionssets}.

\section{Acknowledgments}
This paper was developed during \textit{Beyond Research 2024-II: Combinatorics of Parking Functions}, an undergraduate research experience hosted by the Universidad Nacional de Colombia and the University of Wisconsin--Milwaukee.
P.~E.~Harris  supported in part through a grant from the Simons Foundation (Travel Support for Mathematicians).

\bibliographystyle{plain}
\bibliography{bibliography.bib}

\end{document}